\def\ps@pprintTitle{%
 \let\@oddhead\@empty
 \let\@evenhead\@empty
 \def\@oddfoot{\centerline{\thepage}}%
 \let\@evenfoot\@oddfoot}
\newcommand{\seclabel}[1]{\label{sec:#1}}
\newcommand{\secref}[1]{Section~\ref{sec:#1}}
\newcommand{\eqlabel}[1]{\label{eq:#1}}
\newcommand{\Eqref}[1]{Equation~\eqref{eq:#1}}
\newcommand{\figlabel}[1]{\label{fig:#1}}
\newcommand{\figref}[1]{Figure~\ref{fig:#1}}
\newcommand{\rref}[1]{Reference~\cite{#1}}
\newcommand{\thmlabel}[1]{\label{thm:#1}}
\newcommand{\thmref}[1]{Theorem~\ref{thm:#1}}
\newcommand{\lemlabel}[1]{\label{lem:#1}}
\newcommand{\lemref}[1]{Lemma~\ref{lem:#1}}
\newcommand{\remlabel}[1]{\label{rem:#1}}
\newcommand{\remref}[1]{Remark~\ref{rem:#1}}
\newtheorem{theorem}{Theorem}[section]
\newtheorem{lemma}[theorem]{Lemma}
\theoremstyle{remark}
\newtheorem{remark}{Remark}[section]
\theoremstyle{definition}
\newcommand{\E}{\mathcal{E}}
\newcommand{\F}{\mathcal{F}}
\DeclareMathOperator{\Mod}{Mod}
\newcommand{\N}{\mathcal{N}}
\newcommand{\eff}{\mathrm{eff}}
\begin{document}

\begin{frontmatter}

  \title{Modulus on graphs as a generalization of standard graph
    theoretic quantities}

\author[ksu]{Nathan Albin}
\ead{albin@math.ksu.edu}

\author[reu]{Megan Brunner}

\author[reu]{Roberto Perez}

\author[ksu]{Pietro Poggi-Corradini}
\ead{pietro@math.ksu.edu}

\author[reu]{Natalie Wiens}

\address[ksu]{Department of Mathematics, Kansas State University, 138
  Cardwell Hall, Manhattan, KS 66506}

\address[reu]{Research supported at Kansas State University by NSF REU
  Grant No.~126287~}

\begin{abstract}
  This paper presents new results for the modulus of families of walks
  on a graph---a discrete analog of the modulus of curve families due
  to Beurling and Ahlfors.  Particular attention is paid to the
  dependence of the modulus on its parameters.  Modulus is shown to
  generalize (and interpolate among) three important quantities in
  graph theory: shortest path, effective resistance, and max-flow or
  min-cut.
\end{abstract}

\begin{keyword}
  Modulus of families of walks, effective resistance, shortest path,
  max-flow min-cut
\end{keyword}
  
\end{frontmatter}

\section{Introduction}\seclabel{intro}

Beginning with an example, consider the simple, undirected graph shown
in \figref{example}.  Such a graph may be used to model, for example,
a contact network for use in predicting the spread of disease, where
each vertex indicates an individual in a population and each edge
indicates that the two corresponding individuals have direct contact
with each other.  The disease is then assumed to spread throughout the
network according to some stochastic process.  An important question
in such a model is the following.  How easily can individual $s$
transmit the disease to individual $t$?  In a sense, we are asking for
a distance-like measure between $s$ and $t$.

One obvious approach would be to define distance in the classical
\emph{graph distance} sense: the distance between $s$ and $t$ is the
length of the shortest path connecting them.  In this case, $s$ and
$t$ would be at distance $1$ from each other.  For this particular
application, however, this definition seems inappropriate.  In graph
distance, $s$ is equidistant from $u$ and $t$.  However, any
reasonable disease model would indicate that a disease originating in
$s$ is more likely to spread to $t$ than to $u$; there are more
avenues for transmission from $s$ to $t$.

Taking this shortcoming into account, we might try to define proximity
through the number of distinct transmission pathways.  That is, we
could count the maximum number of pairwise edge-disjoint paths from
$s$ to $t$: 3 in this case.  By Menger's theorem, this is equivalent
to the minimum number of edges that can be removed from the graph in
order to separate $s$ from $t$.  This \emph{min-cut} sense of
proximity addresses the earlier concern with graph distance, since $s$
is now three times closer to $t$ than to $u$.  However, this measure
is also lacking; $t$ is now equally close to both $s$ and $v$, which
again seems counterintuitive for the application.

A more recent description of distance that bridges the gap between
these two extremes is the \emph{resistance
  distance}, see for instance \cite{klein_resistance_1993}.  The resistance distance
between $s$ and $t$ is defined as the effective resistance between the
corresponding nodes in an electrical resistor network wherein the
vertices of the graph correspond to junctions and the edges to
resistors of unit resistance.  In resistance distance, $s$ and $t$ are
$0.454$ units apart, $s$ and $u$ are $1$ unit apart, and $t$ and $v$
are $1.918$ units apart.

\begin{figure}
  \centering
  \includegraphics{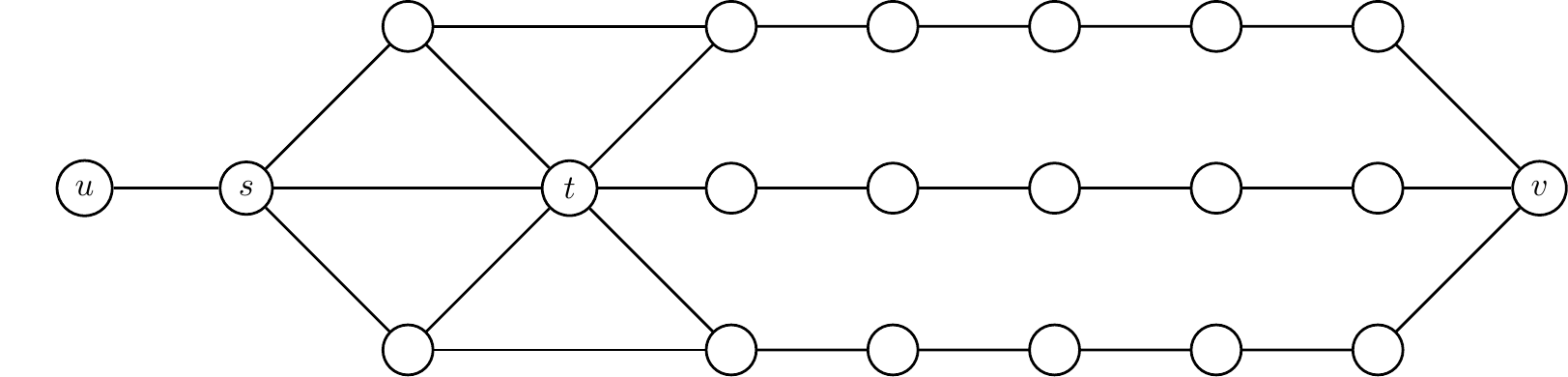}
  \caption{Example graph for \secref{intro}.}
  \figlabel{example}
\end{figure}

Although resistance distance does give a more satisfying answer for
the contact network application, a number of interesting questions
remain.  For example, although the shortest path and min-cut distances
have straightforward extensions to directed graphs, effective
resistance does not. So, what is the natural analog of resistance
distance for directed graphs?  Or consider the following related type
of question for the graph in \figref{example}.  It is clear that $s$
plays a crucial role in the transmission of disease between $u$ and
$t$.  But, how might we quantify the role that $t$ plays in
transmission between $s$ and $v$?  In other words, how rich is the set
of pathways connecting $s$ to $v$ through $t$?  And how does this
compare to the set of all pathways connecting $s$ to $v$?  Or for that
matter, given any interesting collection of walks on the graph, how
does one quantify the richness of the collection?

A potential answer to these questions comes in the form of the
modulus---a discrete analog of the modulus of curve families dating at
least as far back as Beurling \cite{beurling1989} and Ahlfors
\cite{ahlfors1982}.  Although discrete versions of the modulus have
been introduced before, the goal of the present work is to make
explicit the connection to more traditional graph theoretic concepts:

\begin{itemize}
\item We treat the modulus in a general framework, accommodating
  directed or undirected graphs, with or without edge weights
  (\secref{modulus}).  Although this is not entirely new (see
  \secref{related} for a description of related work), we do adopt a
  particular viewpoint that lends itself well to graph theoretic
  interpretations of the modulus.
\item After framing the modulus as a convex optimization problem, we
  derive and interpret a dual formulation of the modulus
  (\secref{lagrange}).  This is similar to the duality of the max-flow
  and min-cut problems, but much more general than has been done before.
\item Using the primal-dual formulation, we establish three
  connections to graph theory, namely that computing the modulus in
  certain circumstances is equivalent to computing either the graph
  distance, min-cut distance, or resistance distance
  (\secref{graph-theory}).  Moreover, we prove that the modulus (which
  depends on a continuous parameter $p$) effectively interpolates
  among these three concepts (\secref{p-dependence}).
\item In \secref{modulus}, the modulus is defined through real-valued
  densities on the edge set of the graph.  We show that these
  densities generalize the concept of potential drop across resistors
  in the computation of resistance distance (see
  \thmref{density-potential}) and analyze the behavior of these
  densities (\secref{rho-properties}).
\end{itemize}

\section{The modulus}\seclabel{modulus}

\subsection{Graphs, walks, and families of walks}\seclabel{walks}

In what follows, $G$ is a graph with vertex set $V$ and edge set $E$.
The sets of vertices and edges are assumed finite, with $n=|V|$
vertices and $m=|E|$ edges.  The graph may be directed or undirected.
In the former case, $E$ is represented as a subset of ordered pairs of
vertices, in the latter as a subset of unordered pairs.  We adopt the
notation $e=(x,y)$ to represent an edge in either case, with the
understanding that $(x,y)=(y,x)$ in the undirected case.  For any
function $f:E\to\mathbb{R}$ and any edge $e=(x,y)$ we will use the
notation $f(e)$ and $f(x,y)$ interchangeably.

The discussion in this paper is restricted to simple graphs, though
there are natural extensions to multigraphs and to other more general
objects. In general, $G$ is assumed to be weighted, with weight
function $\sigma:E\to(0,\infty)$.  A graph is called \emph{unweighted}
if $\sigma\equiv 1$.  For the purposes of this paper, then, a graph is
defined through the triple $G=(V,E,\sigma)$.

Using the standard terminology, a \emph{walk} on $G$ is represented as
\begin{itemize}
\item a finite string of vertices $v_1v_2v_3 \ldots v_{r+1} $ in
  $V$, with the property that $(v_i,v_{i+1})\in E$ for
  $i=1,2,\ldots,r$ or, equivalently,
\item the corresponding string of edges $e_1e_2\ldots e_r$, where $e_i
  =(v_i,v_{i+1})$ for $i=1,2,\ldots,r$.
\end{itemize}
In either case, we require that $r\ge 1$, so that a walk is considered
to traverse at least one edge in the graph.

To each walk $\gamma=e_1e_2\ldots e_r$ is associated the \emph{graph
  length}, $\ell(\gamma) := r$.  That is, $\ell(\gamma)$ counts the
number of ``hops'' made in traversing $\gamma$.  More generally, given
any edge density $\rho:E\to\mathbb{R}$, the \emph{$\rho$-length} of
$\gamma$, $\ell_\rho(\gamma)$, is defined as
\begin{equation*}
  \ell_\rho(\gamma) := \sum_{i=1}^r\rho(e_i).
\end{equation*}
The graph length of $\gamma$ coincides with the $\rho$-length for the
particular choice $\rho\equiv 1$.

Throughout this paper, $\Gamma$ represents an arbitrary nonempty
family of walks on $G$.  A particularly useful class of walk family is
the \emph{connecting family}, denoted $\Gamma(s,t)$, of walks
originating at a vertex $s\in V$ and terminating at a distinct vertex
$t\in V\setminus\{s\}$.

\subsection{Densities and admissibility}

Given a walk family $\Gamma$ and an edge density
$\rho:E\to\mathbb{R}$, the graph length and $\rho$-length of $\Gamma$
are defined as
\begin{equation}\eqlabel{ell-Gamma}
  \ell(\Gamma) := \inf_{\gamma\in\Gamma}\ell(\gamma)\qquad\text{and}\qquad
  \ell_\rho(\Gamma) := \inf_{\gamma\in\Gamma}\ell_\rho(\gamma)
\end{equation}
respectively.  

The family $\Gamma$ determines three sets of densities called the
\emph{admissible set} $A(\Gamma)$, the \emph{relaxed admissible set}
$A'(\Gamma)$, and the \emph{restricted admissible set} $A^*(\Gamma)$
defined as follows.
\begin{equation*}
  \begin{split}
    A'(\Gamma) &:= \{\rho:E\to\mathbb{R} : \ell_\rho(\Gamma)\ge 1 \},\\
    A(\Gamma) &:= \{\rho:E\to\mathbb{R} : \ell_\rho(\Gamma)\ge 1,\quad 0\le\rho \},\\
    A^*(\Gamma) &:= \{\rho:E\to\mathbb{R} : \ell_\rho(\Gamma)\ge 1,\quad 0\le\rho\le 1 \}.\\
  \end{split}
\end{equation*}
Note that these definitions are made in the order of decreasing size:
$A^*(\Gamma)\subset A(\Gamma)\subset A'(\Gamma)$.  The reason for
defining three different admissibility sets will be apparent by
\remref{A-sets}.

\subsection{Energy and modulus}
Given a real parameter $p\ge 1$ or $p=\infty$, the \emph{$p$-energy}
of a density $\rho$ is
\begin{equation*}
  \E_p(\rho) :=
  \begin{cases}
    \sum\limits_{e\in E}\sigma(e)|\rho(e)|^p & \text{if }1\le p < \infty\\
    \max\limits_{e\in E}|\rho(e)| & \text{if } p = \infty
  \end{cases}
\end{equation*}
The definition for $\E_\infty$ is consistent in the sense that
\begin{equation*}
  \forall\rho:E\to\mathbb{R}\qquad \lim_{p\to\infty}\E_p(\rho)^{1/p} = \E_\infty(\rho).
\end{equation*}
For $1\le p\le\infty$, the \emph{$p$-modulus} of $\Gamma$ is defined
as
\begin{equation*}
  \Mod_p(\Gamma) := \inf_{\rho\in A(\Gamma)} \E_p(\rho).
\end{equation*}
(The modulus of an empty family $\Mod_p(\varnothing)$ is defined to be
zero, since the choice $\rho\equiv 0$ is trivially admissible.)

\subsection{Extremal densities}

An edge density $\rho\in A(\Gamma)$ is called \emph{extremal} for a
given family $\Gamma$ and a given $p$ if
\begin{equation*}
  \Mod_p(\Gamma) = \E_p(\rho).
\end{equation*}
The notation $\rho^*$ or $\rho_p$ is frequently used to denote an
extremal density, with the latter used in order to make the dependence
on $p$ explicit.

\begin{lemma}\lemlabel{rho-star}
  Given $\Gamma$ and $1\le p\le\infty$, there exists an extremal
  density $\rho^*$.  If $1<p<\infty$, then $\rho^*$ is unique.
\end{lemma}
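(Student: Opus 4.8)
The plan is to prove existence and uniqueness separately, treating existence uniformly across all $1 \le p \le \infty$ and establishing uniqueness only in the range $1 < p < \infty$, where the energy is strictly convex. For existence, first observe that the constant density $\mathbf 1 \equiv 1$ lies in $A(\Gamma)$: every walk $\gamma = e_1\cdots e_r$ satisfies $r \ge 1$, so $\ell_\rho(\gamma) = \ell(\gamma) = r \ge 1$ for $\rho = \mathbf 1$, and hence $\ell_{\mathbf 1}(\Gamma) \ge 1$. Consequently $\Mod_p(\Gamma) \le \E_p(\mathbf 1) < \infty$, so the infimum is over a nonempty set and is finite. The difficulty is that $A(\Gamma)$ is unbounded, since admissible densities may be arbitrarily large, so the Weierstrass theorem does not apply directly. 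I would circumvent this by restricting to a sublevel set: fixing any $M \ge \E_p(\mathbf 1)$, the set $K := A(\Gamma) \cap \{\rho : \E_p(\rho) \le M\}$ contains $\mathbf 1$ and satisfies $\inf_{\rho \in K}\E_p(\rho) = \Mod_p(\Gamma)$, because no density of energy exceeding $M$ can undercut $\E_p(\mathbf 1)$.

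Next I would verify that $K$ is compact in $\mathbb{R}^m$. The set $A(\Gamma)$ is closed: rewriting the constraint $\ell_\rho(\Gamma) \ge 1$ as the (possibly infinite) system $\{\ell_\rho(\gamma) \ge 1 : \gamma \in \Gamma\}$ of closed linear half-spaces, and intersecting with the closed orthant $\{\rho \ge 0\}$, exhibits $A(\Gamma)$ as an intersection of closed sets. The energy $\E_p$ is continuous on $\mathbb{R}^m$ for every $p$, so the sublevel set $\{\rho : \E_p(\rho) \le M\}$ is closed; moreover it is bounded, because $\sigma(e) > 0$ forces $|\rho(e)|^p \le M/\sigma(e)$ for finite $p$ (respectively $|\rho(e)| \le M$ for $p = \infty$) on each coordinate. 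Thus $K$ is closed and bounded, hence compact, and the continuous function $\E_p$ attains its minimum on $K$, producing an extremal density $\rho^*$.

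For uniqueness when $1 < p < \infty$, I would invoke strict convexity. The admissible set $A(\Gamma)$ is convex, being an intersection of half-spaces and the orthant. On $[0,\infty)$ the map $t \mapsto t^p$ is strictly convex for $p > 1$, so $\E_p(\rho) = \sum_{e\in E}\sigma(e)\rho(e)^p$ — a positive combination of strictly convex functions of the individual coordinates — is strictly convex on $A(\Gamma)$. If $\rho_1 \ne \rho_2$ were both extremal, then by convexity their midpoint $\tfrac12(\rho_1 + \rho_2)$ would lie in $A(\Gamma)$, yet strict convexity would give $\E_p(\tfrac12(\rho_1+\rho_2)) < \tfrac12\E_p(\rho_1) + \tfrac12\E_p(\rho_2) = \Mod_p(\Gamma)$, contradicting the definition of the modulus as the infimum over $A(\Gamma)$. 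Hence the extremal density is unique.

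The main obstacle is the non-compactness of $A(\Gamma)$; once the sublevel-set reduction tames it, both existence (via Weierstrass) and uniqueness (via strict convexity) follow from standard convex-analytic arguments. I would also remark that the uniqueness argument breaks down precisely at $p = 1$ and $p = \infty$, where $\E_p$ is convex but not strictly convex, which is consistent with the lemma restricting the uniqueness claim to $1 < p < \infty$.
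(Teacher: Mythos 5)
Your proof is correct and follows essentially the same route as the paper: the paper observes that $A(\Gamma)$ is closed and convex and that $\rho\mapsto\E_p(\rho)^{1/p}$ is a norm on $\mathbb{R}^m$, then cites the standard fact that a point of a closed convex set closest to the origin exists and is unique when the norm is strictly convex---and your sublevel-set compactness argument for existence together with the strict-convexity midpoint argument for uniqueness is precisely the textbook proof of that cited fact, applied to $\E_p$ directly. A minor bonus of your write-up is that it makes explicit the nonemptiness of $A(\Gamma)$ (via $\rho\equiv 1$, using the paper's convention that every walk traverses at least one edge), a point the paper's proof leaves implicit.
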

\begin{proof}
  For any $1\le p<\infty$, the function $\rho\mapsto\E_p(\rho)^{1/p}$
  is a norm on the $m$-dimensional space of edge densities, as is the
  function $\rho\mapsto\E_\infty(\rho)$.  Moreover, the set
  $A(\Gamma)$ is closed and convex.  Thus, the modulus problem can be
  restated as the problem of finding a point in a closed convex subset
  of $\mathbb{R}^m$ that is closest to the origin in a particular
  norm.  Such a point always exists, and is unique provided the norm
  is strictly convex, as it is in the cases $1<p<\infty$.
\end{proof}

The following observation, which shows that the nonnegativity
constraint on $\rho$ need not be explicitly enforced, will prove
useful in \secref{lagrange}.
\begin{lemma}\lemlabel{rho-bounds}
  Let $\Gamma$ and $1\le p\le\infty$ be given.  Then
  \begin{equation}\eqlabel{mod-A-prime}
    \Mod_p(\Gamma) = \inf_{\rho\in A'(\Gamma)}\E_p(\rho).
  \end{equation}
  An extremal $\rho^*\in A'(\Gamma)$ always exists for the relaxed
  problem, and is unique if $1<p<\infty$.  Moreover, for $1\le
  p<\infty$ any such extremal $\rho^*$ satisfies the bounds $0\le
  \rho^*\le 1$, and thus $\rho^*\in A^*(\Gamma)$. When $p=\infty$
  there exists at least one extremal $\rho^*\in A^*(\Gamma)$.
\end{lemma}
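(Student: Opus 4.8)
\section*{Proof proposal}

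The plan is to reduce everything to two elementary single-edge modifications together with the convexity argument already used in \lemref{rho-star}. To establish \eqref{eq:mod-A-prime}, I would observe that $A(\Gamma)\subset A'(\Gamma)$ gives ``$\le$'' immediately, and for the reverse inequality note that replacing $\rho$ by $|\rho|$ maps $A'(\Gamma)$ into $A(\Gamma)$: for every walk $\gamma$ one has $\ell_{|\rho|}(\gamma)\ge\ell_\rho(\gamma)\ge1$, while $\E_p(|\rho|)=\E_p(\rho)$ since the energy depends only on $|\rho|$. Hence each $\rho\in A'(\Gamma)$ is matched by a density $|\rho|\in A(\Gamma)$ of equal energy, and the two infima coincide. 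For existence and uniqueness over the relaxed problem I would repeat the argument of \lemref{rho-star}: since $A'(\Gamma)=\bigcap_{\gamma\in\Gamma}\{\rho:\ell_\rho(\gamma)\ge1\}$ is an intersection of closed half-spaces it is closed and convex, and nonempty because $\rho\equiv1$ lies in it, so minimizing the norm $\E_p^{1/p}$ (or $\E_\infty$) over it attains its minimum, uniquely when $1<p<\infty$ by strict convexity.

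The substance of the lemma is the two-sided bound for $1\le p<\infty$, which I would prove by contradiction, and here the order of the two steps is essential. For the lower bound, suppose an extremal $\rho^*$ has $\rho^*(e_0)<0$ and let $\tilde\rho$ agree with $\rho^*$ except that $\tilde\rho(e_0)=0$; since $\rho^*(e_0)<0$, removing its contribution only increases $\ell_\rho(\gamma)$ for every walk, so $\tilde\rho\in A'(\Gamma)$, yet $\E_p(\tilde\rho)=\E_p(\rho^*)-\sigma(e_0)|\rho^*(e_0)|^p<\E_p(\rho^*)$, contradicting extremality; thus $\rho^*\ge0$. For the upper bound I would invoke the lower bound just proved: assuming $\rho^*(e_0)>1$, set $\tilde\rho(e_0)=1$. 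The delicate point---and the step I expect to be the main obstacle---is that lowering a density value shortens walks through $e_0$, so admissibility is no longer automatic; it is rescued precisely because $\rho^*\ge0$ on the remaining edges, so any walk using $e_0$ already has $\ell_{\tilde\rho}(\gamma)\ge1$ from its occurrences of $e_0$ alone, while walks avoiding $e_0$ are unchanged. Again $\E_p$ strictly decreases, giving the contradiction, so $0\le\rho^*\le1$ and $\rho^*\in A^*(\Gamma)$.

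Finally, for $p=\infty$ strict convexity fails and these per-edge modifications no longer force \emph{every} extremal into $A^*(\Gamma)$, so I would aim only to produce one such extremal. The extra ingredient is the bound $\Mod_\infty(\Gamma)\le1$: the constant density $\rho\equiv1/\ell(\Gamma)$ is admissible and satisfies $\E_\infty=1/\ell(\Gamma)\le1$ because $\ell(\Gamma)\ge1$. Consequently any extremal $\rho^*$ obeys $|\rho^*|\le1$ everywhere, and passing to $|\rho^*|$---which is again extremal by the first paragraph---yields a density with $0\le|\rho^*|\le1$, hence a member of $A^*(\Gamma)$, completing the argument.
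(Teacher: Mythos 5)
Your proof is correct and follows essentially the same route as the paper's: pointwise truncation arguments, establishing $\rho^*\ge 0$ first and then using that nonnegativity to preserve admissibility when cutting values above $1$ down to $1$, which is exactly the structure of the published argument. The only cosmetic differences are that you use the energy-preserving map $\rho\mapsto|\rho|$ (rather than $\rho\mapsto\max\{\rho,0\}$) to prove the equality of the infima, and you handle $p=\infty$ via the a priori bound $\Mod_\infty(\Gamma)\le 1$ instead of noting that truncation at $1$ is still strictly energy-decreasing in the max norm; both variants are sound.
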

\begin{proof}
  Existence and uniqueness (for $1<p<\infty$) of a minimizer for the
  infimum in \Eqref{mod-A-prime} are proved as in \lemref{rho-star}.

  Now, suppose $\rho\in A'(\Gamma)$ takes a negative value on some
  edge $e'\in E$, and define $\rho^+$ as
  $\rho^+(e)=\max\{\rho(e),0\}$.  Since $\ell_{\rho^+}(\gamma)\ge
  \ell_{\rho}(\gamma)\ge 1$ for any $\gamma\in\Gamma$, $\rho^+$ is
  also admissible.  Moreover, on every edge $e\in E$, $|\rho^+(e)|\le
  |\rho(e)|$ and $|\rho^+(e')|=0<|\rho(e')|$.  For $1\le p<\infty$,
  this implies $\E_p(\rho^+)<\E_p(\rho)$, showing that any minimizing
  $\rho^*$ is in $A(\Gamma)$.  For the $p=\infty$ case, the inequality
  $\E_\infty(\rho^+)\le\E_\infty(\rho)$ holds, but need not be strict.
  However, a minimizing $\rho^*$ can always be found in $A(\Gamma)$.

  Next, suppose $\rho\in A(\Gamma)$ and that $\rho(e')>1$ on some edge
  $e'\in E$.  Define $\rho'(e) = \min\{\rho(e),1\}$.  For any
  $\gamma\in\Gamma$, either $\ell_{\rho'}(\gamma) =
  \ell_\rho(\gamma)\ge 1$, or $\gamma$ traverses an edge where $\rho'$
  takes the value $1$.  In the latter case, the inequality
  $\ell_{\rho'}(\gamma)\ge 1$ is trivial, so $\rho'\in A(\Gamma)$.
  Since $\rho'(e)\le\rho(e)$ on all edges, and since the inequality is
  strict on at least one edge, we have $\E_p(\rho')<\E_p(\rho)$ (for
  $1\le p\le\infty$).  This proves that $\rho^*\le 1$.
\end{proof}

\begin{remark}\remlabel{A-sets}
  A consequence of \lemref{rho-bounds} is that the admissibility set
  $A(\Gamma)$ in the definition of modulus can be replaced with either
  a more relaxed set, $A'(\Gamma)$, or a more restricted set,
  $A^*(\Gamma)$, without changing the outcome.  That is,
  \begin{equation*}
    \Mod_p(\Gamma) := \inf_{\rho\in A(\Gamma)}\E_p(\rho) = 
    \inf_{\rho\in A'(\Gamma)}\E_p(\rho) = 
    \inf_{\rho\in A^*(\Gamma)}\E_p(\rho)\quad
    \text{for all }1\le p\le\infty.
  \end{equation*}
  A convenient choice of
  admissibility set can simplify proofs in certain circumstances.  For
  example, when deriving the Lagrange dual problem in the case $1<p<\infty$
  (\secref{lagrange}) it is better to use the most relaxed set
  $A'(\Gamma)$.  The proof of \thmref{Gamma-star} utilizes the lower
  bound in $A(\Gamma)$, while the bounds of \secref{p-dependence} make
  use of both upper and lower bounds in $A^*(\Gamma)$.
\end{remark}

\subsection{An instructive example}\seclabel{example}

As described in the introduction, the $p$-modulus is a generalization
of three fundamental quantities in graph theory: shortest path length,
effective conductance, and min-cut.  Before proving these facts, it is
useful to explore an example.  Consider the (unweighted and
undirected) graph shown in \figref{paths}---comprising $k$ parallel
simple paths of $\ell$ hops connecting node $s$ to node $t$---with
$\Gamma=\Gamma(s,t)$, the connecting family of walks defined at the
end of \secref{walks}.

\begin{figure}
  \centering
  \includegraphics{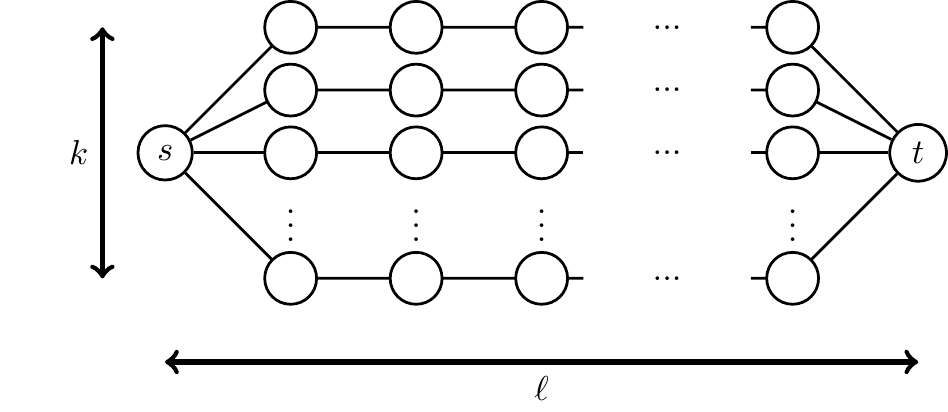}
  \caption{A graph consisting of $k$ simple paths with $\ell$ hops
    each, connecting $s$ to $t$.}
  \figlabel{paths}
\end{figure}

It is a straightforward exercise to show that the choice
$\rho^*\equiv\frac{1}{\ell}$ is extremal for all $1\le p \le \infty$.
Thus, for $1\le p <\infty$, the modulus of $\Gamma$ is
\begin{equation*}
  \Mod_p(\Gamma) = \E_p(\rho^*) = k\ell\left(\frac{1}{\ell}\right)^p
  = \frac{k}{\ell^{p-1}},
\end{equation*}
and for $p=\infty$,
\begin{equation*}
  \Mod_\infty(\Gamma) = \E_\infty(\rho^*) = \frac{1}{\ell}.
\end{equation*}

There are several interesting observations to be made.  First, as a
function of $p$, the modulus is continuous, monotone decreasing, and
decays to zero like $\ell^{-p}$ as $p\to\infty$ if $\ell>1$.  All
three of these properties are generic, as shown in
\secref{p-dependence}.

Moreover, for $1<p<\infty$, the modulus depends on both the number of
distinct walks, $k$, and on the length of these walks, $\ell$.  The
modulus is large for $\Gamma$ containing many, short walks, and small
for $\Gamma$ containing few, long walks.  For $p$ near $1$, the
modulus is much more sensitive to changes in $k$ than to changes in
$\ell$, while, for $p$ very large, the modulus is much more sensitive
to changes in $\ell$.  In the extreme case $p=1$, the modulus loses
all dependence on $\ell$: a large family of walks has large modulus
regardless of their lengths.  On the other hand, when $p=\infty$ the
modulus loses all dependence on $k$: a family's modulus depends only
on the length of the shortest walk and not at all on the number of
walks.  The case $p=2$ is also interesting.  In this case, the modulus
is $k/\ell$, which is the effective conductance of the graph viewed as
a resistor network between $s$ and $t$ with unit resistors placed on
each edge.

Again, these are general properties.  \secref{graph-theory} proves
that, in a specific sense, the $1$-modulus is a generalization of the
min-cut problem, that the $\infty$-modulus is always equal to the
reciprocal of the length of the shortest walk, and that the
$2$-modulus is a generalization of effective conductance.  The results
of \secref{p-dependence}, then, show that $p$ can be thought of as a
tuning parameter, which adjusts the balance of importance of ``many
walks'' with ``short walks'' in the modulus.

\section{Lagrange dual formulation}\seclabel{lagrange}

The $p$-modulus problem is a convex optimization problem: it involves
minimizing the convex function $\E_p$ over the convex set $A(\Gamma)$
defined by a set of linear inequalities.  Furthermore, it was shown
in~\cite{Albin} that this set of inequalities can be assumed finite;
more precisely, the following theorem is true.

\begin{theorem}[\cite{Albin}]\thmlabel{Gamma-star}
  Let $\Gamma$ be a given family of walks on a graph.  There exists a
  finite subfamily $\Gamma^*\subseteq\Gamma$ such that
  $A(\Gamma^*)=A(\Gamma)$.
\end{theorem}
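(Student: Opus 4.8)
The plan is to reduce the (a priori infinite) system of linear constraints defining $A(\Gamma)$ to a finite subsystem by exploiting the nonnegativity built into $A(\Gamma)$. For a walk $\gamma=e_1e_2\cdots e_r$, let $N(\gamma)\in\mathbb{Z}_{\ge0}^E$ record the number of times each edge is traversed, so that $N(\gamma)(e)=|\{i:e_i=e\}|$. Then for any density $\rho$ one has $\ell_\rho(\gamma)=\sum_{e\in E}N(\gamma)(e)\,\rho(e)$, so the admissibility constraint $\ell_\rho(\gamma)\ge1$ contributed by $\gamma$ depends on $\gamma$ only through the integer vector $N(\gamma)$. The first step is therefore to replace the family $\Gamma$ by the set of usage vectors $\{N(\gamma):\gamma\in\Gamma\}\subseteq\mathbb{Z}_{\ge0}^E$.

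The key observation is a monotonicity coming from the constraint $\rho\ge0$ in $A(\Gamma)$: if $N(\gamma')\le N(\gamma)$ componentwise, then for every $\rho\ge0$ we have $\ell_\rho(\gamma)\ge\ell_\rho(\gamma')$, so the constraint from $\gamma'$ is at least as strong as the one from $\gamma$; in particular $\ell_\rho(\gamma')\ge1$ forces $\ell_\rho(\gamma)\ge1$. Hence only the constraints associated to the \emph{minimal} usage vectors are needed. I would then invoke Dickson's lemma---the fact that $\mathbb{Z}_{\ge0}^E\cong\mathbb{Z}_{\ge0}^m$ is well-quasi-ordered under the componentwise partial order---to conclude that the set $\{N(\gamma):\gamma\in\Gamma\}$ has only finitely many minimal elements. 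Choosing one walk $\gamma^*\in\Gamma$ realizing each minimal usage vector produces a finite subfamily $\Gamma^*\subseteq\Gamma$.

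It remains to check $A(\Gamma^*)=A(\Gamma)$. The inclusion $A(\Gamma)\subseteq A(\Gamma^*)$ is immediate, since $\Gamma^*\subseteq\Gamma$ imposes fewer constraints. For the reverse inclusion, take $\rho\in A(\Gamma^*)$ and any $\gamma\in\Gamma$. Because $\mathbb{Z}_{\ge0}^m$ is well-founded, $N(\gamma)$ dominates some minimal usage vector $N(\gamma^*)$ with $\gamma^*\in\Gamma^*$; the monotonicity above then gives $\ell_\rho(\gamma)\ge\ell_\rho(\gamma^*)\ge1$. Since $\rho\ge0$ is already guaranteed by $\rho\in A(\Gamma^*)$, we obtain $\ell_\rho(\Gamma)\ge1$ and hence $\rho\in A(\Gamma)$, completing the argument.

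The routine parts are the reformulation in terms of usage vectors and the two inclusions; the one genuinely nontrivial ingredient, and the step I expect to carry the weight of the proof, is the finiteness of the set of minimal elements. Everything hinges on replacing ``minimal walks'' (which need not exist, since walks may be arbitrarily long and may revisit edges) by minimal elements of a well-quasi-ordered set, where finiteness is guaranteed by Dickson's lemma regardless of how complicated $\Gamma$ is.
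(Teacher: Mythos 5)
Your proof is correct, but note that the paper does not contain its own proof of this statement: the theorem is imported from the reference \cite{Albin}, so there is no in-paper argument to compare against. Judged on its merits, your argument is complete and is essentially the natural route to this result. The reduction to usage vectors $N(\gamma)\in\mathbb{Z}_{\ge 0}^E$ is exactly right, and your monotonicity step correctly exploits the constraint $\rho\ge 0$ that is built into $A(\Gamma)$ --- a point worth emphasizing, since the same claim for the relaxed set $A'(\Gamma)$ (where $\rho$ may be negative) would not follow from this argument. The appeal to Dickson's lemma (finitely many minimal elements of any subset of $\mathbb{Z}_{\ge 0}^m$), together with well-foundedness of the componentwise order to guarantee that every usage vector dominates a minimal one, then closes the gap between the finite subfamily and the full family; both inclusions are verified cleanly. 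You have correctly identified the finiteness of the set of minimal usage vectors as the load-bearing ingredient.
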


Such a finite subfamily is called an \emph{essential subfamily} for
the modulus problem.  Ensuring that $\ell_\rho(\gamma)\ge 1$ is true
for all walks in the essential subfamily is equivalent to ensuring the
inequality for the entire family.  Thus, even when a described set of
walks (e.g., the connecting family $\Gamma(s,t)$) contains infinitely
many members, it can always be assumed that $\Gamma$ has been replaced
by a finite essential subfamily (e.g., the simple paths from $s$ to
$t$ when $\Gamma=\Gamma(s,t)$).  With this assumption, the modulus
problem becomes an ordinary convex program, in the sense
of~\cite[Sec.~28]{Rockafellar1970}, taking the form
\begin{equation}\eqlabel{cvx-general}
  \begin{split}
    \text{minimize}\quad&\E_p(\rho)\\
    \text{subject to}\quad& \sum_{e\in E}\N(\gamma,e)\rho(e)\ge
    1\quad\forall\gamma\in\Gamma,\\
    & \rho(e)\ge 0\quad\forall e\in E,
  \end{split}
\end{equation}
where $\N(\gamma,e)$ is the $|\Gamma|\times|E|$ matrix whose
$(\gamma,e)$ entry contains the number of times walk $\gamma$
traverses edge $e$.

Using standard techniques from convex optimization, we define the
Lagrangian for the problem as
\begin{equation}\eqlabel{general-lagrangian}
  L_p(\rho,\lambda,\mu) := \E_p(\rho) + 
  \sum_{\gamma\in\Gamma}\lambda(\gamma)\left(1-\sum_{e\in E}\N(\gamma,e)\rho(e)\right)
  -\sum_{e\in E}\mu(e)\rho(e),
\end{equation}
where $\lambda:\Gamma\to[0,\infty)$ and $\mu:E\to[0,\infty)$ are the
Lagrange dual variables associated with the admissibility constraints.
The usual \emph{weak duality} statement takes the following form.
\begin{lemma}\lemlabel{weak-duality}
  Let $\Gamma$ and $p$ be given.  Then
  \begin{equation*}
    \Mod_p(\Gamma) = \inf_{\rho:E\to\mathbb{R}}\sup_{\substack{\lambda:\Gamma\to[0,\infty)\\\mu: E\to[0,\infty)}}L_p(\rho,\lambda)
    \ge \sup_{\substack{\lambda:\Gamma\to[0,\infty)\\\mu: E\to[0,\infty)}}\inf_{\rho:E\to\mathbb{R}}L_p(\rho,\lambda)
    = \sup_{\substack{\lambda:\Gamma\to[0,\infty)\\\mu: E\to[0,\infty)}}\F_p(\lambda,\mu)
  \end{equation*}
  where
  \begin{equation*}
    \F_p(\lambda,\mu) := \inf_{\rho:E\to\mathbb{R}}L_p(\rho,\lambda,\mu).
  \end{equation*}
\end{lemma}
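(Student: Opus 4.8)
The plan is to prove the displayed chain from left to right, recognizing that only the leftmost equality carries real content: the middle relation is the generic max--min inequality and the rightmost is nothing more than the definition of $\F_p$. So the strategy reduces to analyzing the inner supremum of the Lagrangian \eqref{general-lagrangian} for a fixed density.

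First I would fix an arbitrary $\rho:E\to\mathbb{R}$ and evaluate $\sup_{\lambda,\mu}L_p(\rho,\lambda,\mu)$ explicitly. Using $\sum_{e}\N(\gamma,e)\rho(e)=\ell_\rho(\gamma)$, the Lagrangian regroups as
\[
  L_p(\rho,\lambda,\mu) = \E_p(\rho) + \sum_{\gamma\in\Gamma}\lambda(\gamma)\bigl(1-\ell_\rho(\gamma)\bigr) - \sum_{e\in E}\mu(e)\rho(e).
\]
Because $\lambda$ and $\mu$ range over nonnegative functions and appear linearly, each multiplier may be tuned independently. If $\rho$ fails to be admissible---meaning either $\ell_\rho(\gamma)<1$ for some $\gamma$ or $\rho(e)<0$ for some $e$---then the matching coefficient $\bigl(1-\ell_\rho(\gamma)\bigr)$ or $\bigl(-\rho(e)\bigr)$ is strictly positive, and sending that one multiplier to $+\infty$ forces the supremum to $+\infty$. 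If instead $\rho\in A(\Gamma)$, then every coefficient $1-\ell_\rho(\gamma)\le 0$ and every $-\rho(e)\le 0$, so the supremum over $\lambda,\mu\ge 0$ is attained at $\lambda\equiv 0$, $\mu\equiv 0$ and equals $\E_p(\rho)$. This produces the key identity
\[
  \sup_{\lambda,\mu}L_p(\rho,\lambda,\mu) =
  \begin{cases}
    \E_p(\rho) & \text{if }\rho\in A(\Gamma),\\
    +\infty & \text{otherwise,}
  \end{cases}
\]
and taking the infimum over all $\rho:E\to\mathbb{R}$ collapses to the infimum over $A(\Gamma)$, which is precisely $\Mod_p(\Gamma)$. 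Note that this argument never uses the specific form of $\E_p$, so it covers all $1\le p\le\infty$ uniformly.

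For the middle inequality I would invoke the standard observation that for any fixed pair $(\lambda,\mu)$ one has $\inf_{\rho}L_p(\rho,\lambda,\mu)\le\inf_{\rho}\sup_{\lambda',\mu'}L_p(\rho,\lambda',\mu')$, since the right-hand side dominates pointwise in $\rho$; taking the supremum over $(\lambda,\mu)$ on the left then preserves the inequality. The concluding equality is immediate from the definition $\F_p(\lambda,\mu):=\inf_{\rho}L_p(\rho,\lambda,\mu)$.

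The main obstacle is the bookkeeping in the first step, where one must check both directions simultaneously: that violating \emph{any single} constraint already sends the supremum to $+\infty$ (ruling out infeasible densities), and that for feasible $\rho$ the penalty terms are maximized at the \emph{zero} multipliers rather than contributing a spurious positive amount. Once that characterization of the inner supremum is pinned down, the rest of the proof is routine.
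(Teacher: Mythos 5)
Your proposal is correct: the characterization of the inner supremum as $\E_p(\rho)$ on $A(\Gamma)$ and $+\infty$ off it, followed by the generic max--min inequality and the definition of $\F_p$, is exactly the standard weak-duality argument. The paper itself offers no proof of this lemma---it invokes it as ``the usual weak duality statement'' from convex optimization (in the setting where $\Gamma$ has already been replaced by a finite essential subfamily, which your term-by-term tuning of the multipliers implicitly requires)---so your write-up simply supplies the routine argument the paper leaves implicit, and it does so correctly for all $1\le p\le\infty$.
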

Given any $\lambda:\Gamma\to[0,\infty)$ and any $\mu:E\to[0,\infty)$,
weak duality implies that $\Mod_p(\Gamma)\ge\F_p(\lambda,\mu)$.  The
problem of maximizing this lower bound over all nonnegative $\lambda$
and $\mu$ is the \emph{Lagrange dual problem} to the modulus:
\begin{equation*}
  \begin{split}
    \text{maximize}\quad&\F_p(\lambda,\mu)\\
    \text{subject to}\quad& \lambda(\gamma)\ge
    0\quad\forall\gamma\in\Gamma,\\
    &\mu(e)\ge 0 \quad\forall e\in E.
  \end{split}
\end{equation*}
\begin{remark}\remlabel{strong-duality}
  Because the inequality constraints in~\eqref{eq:cvx-general} are
  affine, it follows from Theorem~28.2 of~\cite{Rockafellar1970} that
  \emph{strong duality} holds.  That is, there exists a saddle point
  $(\rho^*,\lambda^*,\mu^*)$ of the Lagrangian.  This is an important
  property of the modulus problem, especially in the context of
  numerical approximation, and was explored in a specialized setting
  in~\cite{Albin}.  For the present paper, the weak duality result is
  usually sufficient.
\end{remark}

When working with the dual problem, it is convenient to have a more
explicit formulation.  We derive such formulations in three different
cases: $1<p<\infty$, $p=1$ and $p=\infty$.

\subsection{The case $1<p<\infty$}

When $1<p<\infty$, \lemref{rho-bounds} implies that the $\rho\ge 0$
constraints need not be explicitly enforced in~\eqref{eq:cvx-general},
and can thus be removed.  The simplified Lagrangian (omitting the dual
variables $\mu$ as they are no longer necessary) takes the form
\begin{equation*}
  L_p(\rho,\lambda) := \sum_{e\in E}\sigma(e)|\rho(e)|^p 
  + \sum_{\gamma\in\Gamma}\lambda(\gamma)\left(1-\sum_{e\in E}\N(\gamma,e)\rho(e)\right),
\end{equation*}
which is differentiable in each $\rho(e)$, since $p>1$.  Thus, the value of the
dual energy
\begin{equation*}
  \F_p(\lambda) = \inf_{\rho:E\to\mathbb{R}}\left\{
    \sum_{e\in E}\sigma(e)|\rho(e)|^p 
    + \sum_{\gamma\in\Gamma}\lambda(\gamma)\left(1-\sum_{e\in E}\N(\gamma,e)\rho(e)\right)    
  \right\}
\end{equation*}
can be found by solving $\nabla_\rho L_p=0$.  The optimal density,
$\rho_\lambda$, for a given $\lambda$ must satisfy the stationarity
condition
\begin{equation*}
  p\sigma(e)\rho_\lambda(e)|\rho_\lambda(e)|^{p-2} - \sum_{\gamma\in\Gamma}\N(\gamma,e)\lambda(\gamma) = 0\quad\forall e\in E.
\end{equation*}
Since $\lambda\ge 0$ and $\N$ has non-negative entries, the above
equation implies that $\rho_\lambda\ge 0$ and so
\begin{equation}\eqlabel{rho-lambda}
  \rho_\lambda(e) = \left(\frac{1}{p\sigma(e)}\sum_{\gamma\in\Gamma}\N(\gamma,e)\lambda(\gamma)\right)^{\frac{1}{p-1}}.
\end{equation}
After some simplification, this produces the following formula for
$\F_p(\lambda)$.
\begin{equation*}
  \F_p(\lambda) = L(\rho_\lambda,\lambda) = 
  \sum_{\gamma\in\Gamma}\lambda(\gamma) - (p-1)\sum_{e\in E}\sigma(e)\left(
    \frac{1}{p\sigma(e)}\sum_{\gamma\in\Gamma}\N(\gamma,e)\lambda(\gamma)
  \right)^{\frac{p}{p-1}}.
\end{equation*}
We conclude that the Lagrange dual optimization problem can be written
as
\begin{equation}\eqlabel{dual-p}
  \begin{split}
    \text{maximize}\quad&
    \sum_{\gamma\in\Gamma}\lambda(\gamma) - (p-1)\sum_{e\in E}\sigma(e)\left(
      \frac{1}{p\sigma(e)}\sum_{\gamma\in\Gamma}\N(\gamma,e)\lambda(\gamma)
    \right)^{\frac{p}{p-1}}\\
    \text{subject to}\quad& \lambda(\gamma)\ge 0\quad\forall\gamma\in\Gamma.
  \end{split}
\end{equation}

\begin{remark}
  Because strong duality holds (see \remref{strong-duality}),
  solving~\eqref{eq:dual-p} is equivalent to computing the modulus.
  This provides an interesting interpretation of the modulus: rather
  than assigning values to edges (through $\rho$), we instead assign
  values to walks in $\Gamma$ (through $\lambda$) in such a way as to
  maximize the dual energy.  \Eqref{rho-lambda} provides the formula
  for determining the extremal density $\rho^*$ from an extremal
  $\lambda^*$. On the other hand, the extremal $\lambda^*$ are
  typically not unique.
\end{remark}

\subsection{The case $p=1$}
In the case $p=1$, the general Lagrangian in
\Eqref{general-lagrangian} is not differentiable, so the preceding
argument does not apply.  This issue can be circumvented by replacing
the $1$-modulus optimization problem with an equivalent problem:
\begin{equation*}
  \begin{split}
    \text{minimize}\quad&\sum_{e\in E}\sigma(e)\rho(e)\\
    \text{subject to}\quad& \sum_{e\in E}\N(\gamma,e)\rho(e)\ge
    1\quad\forall\gamma\in\Gamma\\
    & \rho(e)\ge 0\quad\forall e\in E.
  \end{split}  
\end{equation*}
Dropping the absolute values from the energy has no effect on the
outcome, since the values of $\rho$ are required to be nonnegative.
However, without the absolute values, the energy is now differentiable
for general densities $\rho:E\to\mathbb{R}$.

Introducing the dual variables $\lambda:\Gamma\to[0,\infty)$ and
$\mu:E\to[0,\infty)$, the Lagrangian for this problem is
\begin{equation*}
  \begin{split}
    L_1(\rho,\lambda,\mu) &:= \sum_{e\in E}\sigma(e)\rho(e) +
    \sum_{\gamma\in\Gamma}\lambda(\gamma)\left(1-\sum_{e\in
        E}\N(\gamma,e)\rho(e)\right) - \sum_{e\in E}\mu(e)\rho(e)\\
    &= \sum_{\gamma\in\Gamma}\lambda(\gamma) + \sum_{e\in
      E}\rho(e)\left( \sigma(e) -
      \sum_{\gamma\in\Gamma}\N(\gamma,e)\lambda(\gamma) - \mu(e)
    \right).
  \end{split}
\end{equation*}
Since the Lagrangian is affine in $\rho(e)$, it is clear that
\begin{equation*}
  \F_1(\lambda,\mu) := \inf_{\rho:E\to\mathbb{R}} L_1(\rho,\lambda,\mu)
  =
  \begin{cases}
    \sum\limits_{\gamma\in\Gamma}\lambda(\gamma) &\text{if}\quad \sigma(e) -
    \sum\limits_{\gamma\in\Gamma}\N(\gamma,e)\lambda(\gamma) - \mu(e) = 0\quad\forall e\in E\\
    -\infty &\text{otherwise}
  \end{cases}
\end{equation*}
The dual problem involves maximizing $\F_1$ over feasible $\lambda$
and $\mu$, and therefore it will never be advantageous to choose the
dual variables such that $\F_1(\lambda,\mu)=-\infty$ (assuming this
can be avoided).  In fact, provided $\lambda$ satisfies the
inequalities
$\sum_{\gamma\in\Gamma}\N(\gamma,e)\lambda(\gamma)\le\sigma(e)$ (e.g.,
$\lambda\equiv 0$), a suitable nonnegative $\mu$ can always be defined
to make $\F_1(\lambda,\mu)$ finite, whereas, if any of the
aforementioned inequalities is violated in the choice of $\lambda$,
there will be no nonnegative $\mu$ for which $\F_1(\lambda,\mu)$ does not
evaluate to $-\infty$.

Thus, the dual problem can be written as follows.
\begin{equation}\eqlabel{dual-1}
  \begin{split}
    \text{maximize}\quad&
    \sum_{\gamma\in\Gamma}\lambda(\gamma)\\
    \text{subject to}\quad& \sum_{\gamma\in\Gamma}\N(\gamma,e)\lambda(\gamma) \le \sigma(e)\quad \forall e\in E\\
      &\lambda(\gamma)\ge 0\quad\forall\gamma\in\Gamma.
  \end{split}
\end{equation}

\subsection{The case $p=\infty$}

In the $p=\infty$ case, the general Lagrangian is once again not
differentiable.  In this case, an appropriate equivalent problem is
the following.
\begin{equation*}
  \begin{split}
    \text{minimize}\quad& t\\
    \text{subject to}\quad& \sum_{e\in E}\N(\gamma,e)\rho(e)\ge
    1\quad\forall\gamma\in\Gamma\\
    & 0\le \rho(e)\le t\quad\forall e\in E.
  \end{split}
\end{equation*}
The associated Lagrangian takes the form
\begin{equation*}
  \begin{split}
    L_\infty(t,\rho,\lambda,\mu,\eta) &:= t +
    \sum_{\gamma\in\Gamma}\lambda(\gamma)\left(1-\sum_{e\in
        E}\N(\gamma,e)\rho(e)\right) - \sum_{e\in E}\mu(e)\rho(e) +
    \sum_{e\in E}\eta(e)\left(\rho(e)-t\right)\\
    &= t\left(1 - \sum_{e\in E}\eta(e)\right)
    + \sum_{e\in E}\rho(e)\left(
      -\sum_{\gamma\in\Gamma}\N(\gamma,e)\lambda(\gamma) - \mu(e) + \eta(e)
    \right)
    + \sum_{\gamma\in\Gamma}\lambda(\gamma).
  \end{split}
\end{equation*}

As in the $p=1$ case, the Lagrangian is affine in the primal variables
$t$ and $\rho$, so the dual objective function
$\F_\infty(\lambda,\mu,\eta)$ is given by
\begin{equation*}
  \F_\infty(\lambda,\mu,\eta) :=
  \begin{cases}
    \sum\limits_{\gamma\in\Gamma}\lambda(\gamma) & \text{if }
    \sum\limits_{e\in E}\eta(e)=1\quad\text{and}\quad
    \sum\limits_{\gamma\in\Gamma}\N(\gamma,e)\lambda(\gamma) + \mu(e) -
    \eta(e)=0\quad \forall e\in E \\
    -\infty & \text{otherwise}
  \end{cases}
\end{equation*}
Similarly to the case $p=1$, this produces the following form of the
dual problem.
\begin{equation}\eqlabel{dual-inf}
  \begin{split}
    \text{maximize}\quad&
    \sum_{\gamma\in\Gamma}\lambda(\gamma)\\
    \text{subject to}\quad& \sum_{\gamma\in\Gamma}\N(\gamma,e)\lambda(\gamma) \le \eta(e)\quad \forall e\in E\\
    &\sum_{e\in E}\eta(e)= 1 \\
    &\lambda(\gamma)\ge 0\quad\forall\gamma\in\Gamma.
  \end{split}
\end{equation}

\section{Connections to common graph theoretic quantities}
\seclabel{graph-theory}

One of the key contributions of the present work is to make explicit
the connections between the concept of $p$-modulus and the graph
theoretic concepts of shortest path, effective conductance, and min-cut.  The present section is devoted to establishing these
connections.

\subsection{Shortest path}

The connection to shortest path is the broadest and simplest result to
establish.  Recall that $\ell(\Gamma)$, defined in \Eqref{ell-Gamma},
is exactly the length of the shortest walk in $\Gamma$ (in terms of hops).

\begin{theorem}\thmlabel{inf-mod}
  Let $\Gamma$ be any family of walks, then
  \begin{equation*}
    \Mod_\infty(\Gamma) = \frac{1}{\ell(\Gamma)}.
  \end{equation*}
\end{theorem}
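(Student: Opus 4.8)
The plan is to prove the identity by establishing the two matching inequalities $\Mod_\infty(\Gamma)\le 1/\ell(\Gamma)$ and $\Mod_\infty(\Gamma)\ge 1/\ell(\Gamma)$. First I would observe that, since every walk traverses at least one edge and graph lengths are positive integers, the infimum $\ell(\Gamma)=\inf_{\gamma\in\Gamma}\ell(\gamma)$ is itself a positive integer and is \emph{attained} by some walk $\gamma^*\in\Gamma$ with $\ell(\gamma^*)=\ell(\Gamma)$. In particular $1/\ell(\Gamma)$ is well defined and positive, which is what lets the lower bound argument below use a concrete walk rather than a minimizing sequence.

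For the upper bound I would exhibit a single admissible density whose energy realizes the target value, namely the constant density $\rho_0\equiv 1/\ell(\Gamma)$. For any $\gamma\in\Gamma$ with edge string $e_1\cdots e_r$ we have $\ell_{\rho_0}(\gamma)=\sum_{i=1}^r\rho_0(e_i)=r/\ell(\Gamma)=\ell(\gamma)/\ell(\Gamma)\ge 1$, using $\ell(\gamma)\ge\ell(\Gamma)$. Hence $\rho_0\in A(\Gamma)$, and because $\rho_0\ge 0$ is constant, $\E_\infty(\rho_0)=\max_{e\in E}|\rho_0(e)|=1/\ell(\Gamma)$. Taking the infimum over $A(\Gamma)$ gives $\Mod_\infty(\Gamma)\le 1/\ell(\Gamma)$.

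For the lower bound I would show that no admissible density can do better. Let $\rho\in A(\Gamma)$ be arbitrary; since the definition of $A(\Gamma)$ forces $\rho\ge 0$, we have $\E_\infty(\rho)=\max_{e\in E}\rho(e)$. Evaluating the admissibility constraint on the shortest walk $\gamma^*=e_1\cdots e_{\ell(\Gamma)}$ yields $1\le\ell_\rho(\gamma^*)=\sum_{i=1}^{\ell(\Gamma)}\rho(e_i)$. Bounding each summand by the maximum, $\rho(e_i)\le\E_\infty(\rho)$, gives $1\le\ell(\Gamma)\,\E_\infty(\rho)$, that is, $\E_\infty(\rho)\ge 1/\ell(\Gamma)$. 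Since $\rho$ was arbitrary, $\Mod_\infty(\Gamma)\ge 1/\ell(\Gamma)$, and combining the two inequalities completes the proof.

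The argument is short and I do not anticipate a serious obstacle; the only points requiring care are the two noted above. One is that the shortest-walk infimum is genuinely attained, so that the lower bound can be tested against an honest walk $\gamma^*$. The other is that the edge sum defining $\ell_\rho(\gamma^*)$ counts multiplicities, so the per-term estimate $\rho(e_i)\le\E_\infty(\rho)$ must be applied to each of the $\ell(\Gamma)$ terms in the string rather than to distinct edges; this is exactly why the factor is $\ell(\Gamma)$ (the number of hops) and not the number of distinct edges traversed. Both points are handled automatically by the bookkeeping above.
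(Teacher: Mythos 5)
Your proof is correct and follows essentially the same route as the paper: the constant density $\rho_0\equiv 1/\ell(\Gamma)$ gives the upper bound, and evaluating the admissibility constraint on a shortest walk $\gamma^*$ gives the matching lower bound $\E_\infty(\rho)\ge 1/\ell(\Gamma)$. Your two points of care (attainment of the infimum because graph lengths are positive integers, and counting edge multiplicities in the sum) are exactly the details the paper handles implicitly via its $\N(\gamma,e)$ notation.
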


\begin{proof}
  Let $\gamma'\in\Gamma$ be a walk such that
  $\ell(\gamma')=\ell(\Gamma)=\ell$.  The density $\rho_0 \equiv 1/\ell$
  is admissible, since for any $\gamma\in\Gamma$
  \begin{equation*}
    \ell_{\rho_0}(\gamma) = \sum_{e\in E}\N(\gamma,e)\rho_0(e) 
    = \frac{\ell(\gamma)}{\ell(\gamma')} \ge 1.
  \end{equation*}
  Thus $\Mod_\infty(\Gamma)\le\ell^{-1}$.  On the other hand, if $\rho\in A(\Gamma)$, then
  \begin{equation}
    \eqlabel{rho-max-bound}
    1\le \ell_\rho(\gamma') = \sum_{e\in E}\N(\gamma',e)\rho(e) \le \ell(\gamma')\max_{e\in E}\rho(e),
  \end{equation}
  so every admissible density must satisfy $\E_\infty(\rho)\ge
  \ell^{-1}$.
\end{proof}

\subsection{Effective conductance}

An undirected graph $G=(V,E,\sigma)$ is a model for a resistor network
with edges representing resistors with conductances $\sigma$,
connected at junctions represented by the vertices.  Let $s$ and $t$
be distinct vertices in $V$.  The effective conductance
$C_{\eff}(s,t)$ (the reciprocal of the effective resistance) between
$s$ and $t$ can be found by minimizing the total power
\begin{equation*}
  \text{Power} = \sum_{(x,y)\in E}\sigma(x,y)\left(\phi(x)-\phi(y)\right)^2
\end{equation*}
over all voltage potentials $\phi:V\to\mathbb{R}$ satisfying
$\phi(s)=0$ and $\phi(t)=1$.  The following theorem shows that, for
$1<p<\infty$, the extremal density $\rho^*$ for the modulus of a
connecting family of walks can be related to a generalized voltage
potential. Such a result, in the language of extremal distance and in
the special case $p=2$, first appeared in the work of Duffin
\cite{Duffin1962} (see also \cite[Proposition 6.2]{epcz}). For a
version in metric spaces, see \cite[Theorem 7.31]{heinonen2001}.

\begin{theorem}\thmlabel{density-potential}
  Let $G=(V,E,\sigma)$ be an undirected graph, let
  $\Gamma=\Gamma(s,t)$ be the connecting family of walks from $s$ to
  $t$, two distinct vertices in $V$, and let $1<p<\infty$.  Let
  $\rho^*$ be the extremal density for \Eqref{cvx-general}.  Then
  there exists a vertex potential $\phi^*:V\to\mathbb{R}$ such that
  $\phi^*(s)=0$, $\phi^*(t)=1$, and
  \begin{equation}\eqlabel{density-potential}
    \rho^*(x,y) = |\phi^*(x)-\phi^*(y)|\qquad\forall(x,y)\in E.
  \end{equation}
  Moreover, this $\phi^*$ solves the optimization problem
  \begin{equation*}
    \begin{split}
      \text{minimize}\quad& \sum_{(x,y)\in E}\sigma(x,y)|\phi(x)-\phi(y)|^p\\
      \text{subject to}\quad& \phi(s)=0,\quad \phi(t)=1.
    \end{split}
  \end{equation*}
  When $p=2$, it follows that $\Mod_2(\Gamma(s,t))=C_{\eff}(s,t)$.
\end{theorem}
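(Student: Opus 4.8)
The plan is to bypass the Lagrangian duality of \secref{lagrange} entirely and instead construct the potential $\phi^*$ directly from the extremal density $\rho^*$, using the uniqueness clause of \lemref{rho-star} to pin down equality. The key opening observation is that potentials automatically generate admissible densities. If $\phi:V\to\mathbb{R}$ satisfies $\phi(s)=0$ and $\phi(t)=1$, define $\rho_\phi(x,y):=|\phi(x)-\phi(y)|$. For any walk $\gamma=v_1\cdots v_{r+1}\in\Gamma(s,t)$, the triangle inequality gives $\ell_{\rho_\phi}(\gamma)=\sum_i|\phi(v_{i+1})-\phi(v_i)|\ge|\phi(t)-\phi(s)|=1$, so $\rho_\phi\in A(\Gamma)$. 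Hence $\E_p(\rho_\phi)=\sum_{(x,y)\in E}\sigma(x,y)|\phi(x)-\phi(y)|^p\ge\Mod_p(\Gamma)$ for every feasible $\phi$. This is already one half of the second claim, and it reduces the whole theorem to producing a single feasible $\phi^*$ whose density $\rho_{\phi^*}$ attains the modulus.

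To build $\phi^*$, I would use $\rho^*$ itself as a metric. Let $d(v)$ denote the $\rho^*$-distance from $s$ to $v$, i.e. the infimum of $\ell_{\rho^*}(\gamma)$ over walks from $s$ to $v$, with $d(s):=0$ and $d(v):=+\infty$ when $v$ is unreachable, and set $\phi^*:=\min\{1,d\}$. Then $\phi^*(s)=0$; moreover every walk in $\Gamma(s,t)$ has $\rho^*$-length at least $1$ by admissibility of $\rho^*$, so $d(t)\ge1$ and therefore $\phi^*(t)=1$, making $\phi^*$ feasible. The crucial estimate is $|\phi^*(x)-\phi^*(y)|\le\rho^*(x,y)$ on every edge. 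Since the graph is undirected, appending the edge $(x,y)$ to a walk ending at $x$ yields $d(y)\le d(x)+\rho^*(x,y)$ and, symmetrically, $d(x)\le d(y)+\rho^*(x,y)$, so $|d(x)-d(y)|\le\rho^*(x,y)$; truncation by $\min\{1,\cdot\}$ is $1$-Lipschitz and only shrinks differences. Edges with both endpoints unreachable from $s$ are harmless, since there $\phi^*\equiv1$ while $\rho^*\ge0$ by \lemref{rho-bounds}.

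The argument then closes by a sandwich that forces equality. From $0\le\rho_{\phi^*}\le\rho^*$ and the monotonicity of $\E_p$ on nonnegative densities we get $\E_p(\rho_{\phi^*})\le\E_p(\rho^*)=\Mod_p(\Gamma)$, while $\rho_{\phi^*}\in A(\Gamma)$ forces $\E_p(\rho_{\phi^*})\ge\Mod_p(\Gamma)$. Thus $\rho_{\phi^*}$ is itself extremal, and the uniqueness statement of \lemref{rho-star}—valid precisely because $1<p<\infty$—gives $\rho_{\phi^*}=\rho^*$, i.e. $\rho^*(x,y)=|\phi^*(x)-\phi^*(y)|$. Combining feasibility of $\phi^*$ with the opening inequality shows $\phi^*$ minimizes the potential functional. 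Finally, for $p=2$ the potential problem is verbatim the power minimization defining $C_{\eff}(s,t)$, so $\Mod_2(\Gamma(s,t))=\E_2(\rho_{\phi^*})=C_{\eff}(s,t)$.

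I expect the main obstacle to be the estimate $|\phi^*(x)-\phi^*(y)|\le\rho^*(x,y)$: one must get the inequality oriented so the sandwich collapses, and one must truncate $d$ at $1$ so that $\phi^*(t)$ equals exactly $1$ rather than merely exceeding it. Once these are in place, invoking uniqueness does all the remaining work and no stationarity computation is needed.
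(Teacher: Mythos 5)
Your proposal is correct and is essentially the paper's own proof: the paper likewise constructs $\phi^*$ as the $\rho^*$-distance from $s$, proves $|\phi^*(x)-\phi^*(y)|\le\rho^*(x,y)$ by appending the edge $(x,y)$ to a (near-)minimal walk from $s$ to $x$, notes that potentials yield admissible densities by telescoping, and invokes the uniqueness of the extremal density for $1<p<\infty$ to force $\rho_{\phi^*}=\rho^*$. The only cosmetic difference is that you truncate $d$ at $1$ (and handle unreachable vertices), whereas the paper instead first observes that $\ell_{\rho^*}(\Gamma)=1$---since otherwise rescaling $\rho^*$ would lower its energy---so that no truncation is needed.
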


\begin{proof}
  Let $\rho^*\in A(\Gamma)$ be the extremal density.  Note that
  $\ell_{\rho^*}(\Gamma)=1$, for, if not, the density
  $\rho' = \rho^*/\ell_{\rho^*}(\Gamma)$ is also admissible and has lower
  $p$-energy than $\rho^*$.  Define $\phi^*(s)=0$ and for
  $x\in V\setminus\{s\}$
  \begin{equation*}
    \phi^*(x) = \min_{\gamma\in\Gamma(s,x)}\ell_{\rho^*}(\gamma).
  \end{equation*}
  Since $\Gamma=\Gamma(s,t)$, $\phi^*(t)=1$.  To see that
  \Eqref{density-potential} holds, define $\rho'(x,y) =
  |\phi^*(x)-\phi^*(y)|$ for $(x,y)\in E$.  For any
  $\gamma=sv_2v_3\ldots v_rt\in\Gamma$, 
  \begin{equation*}
    \ell_{\rho'}(\gamma) = |\phi^*(s)-\phi^*(v_2)| + |\phi^*(v_2)-\phi^*(v_3)| +
    \cdots + |\phi^*(v_r)-\phi^*(t)| \ge |\phi^*(t)-\phi^*(s)|=1,
  \end{equation*}
  so $\rho'\in A(\Gamma)$.  Let $(x,y)\in E$ be an edge.  Without loss
  of generality, $\phi^*(x)\le\phi^*(y)$ and $y\ne s$.  We claim that
  \begin{equation}\eqlabel{rhop-lt-rhos}
    0 \le \rho'(x,y) = \phi^*(y)-\phi^*(x) \le \rho^*(x,y).
  \end{equation}
  The case $x=s$ is trivial.  Suppose that $x\ne s$ and let
  $\gamma\in\Gamma(s,x)$ be a walk such that
  $\ell_{\rho^*}(\gamma)=\phi^*(x)$.  Letting $\gamma'\in\Gamma(s,y)$
  be the walk obtained by first traversing $\gamma$ and then
  traversing the edge $(x,y)$, we have
  \begin{equation*}
    \phi^*(y) \le \ell_{\rho^*}(\gamma') = \ell_{\rho^*}(\gamma) + \rho^*(x,y),
  \end{equation*}
  which implies \Eqref{rhop-lt-rhos}.

  \Eqref{rhop-lt-rhos} implies that $\E_p(\rho')\le\E_p(\rho^*)$ and,
  by uniqueness of the extremal density, $\rho^*=\rho'$.  To see that
  $\phi^*$ solves the optimization problem, let $\phi':V\to\mathbb{R}$
  with $\phi'(s)=0$ and $\phi'(t)=1$ and define
  $\rho'(x,y)=|\phi'(x)-\phi'(y)|$.  As before, $\rho'\in A(\Gamma)$
  and so
  \begin{equation*}
    \sum_{(x,y)\in E}\sigma(x,y)|\phi^*(x)-\phi^*(y)|^p
    = \E_p(\rho^*) \le \E_p(\rho') = 
    \sum_{(x,y)\in E}\sigma(x,y)|\phi'(x)-\phi'(y)|^p.
  \end{equation*}
\end{proof}

\subsection{Max-Flow Min-Cut}

Modulus is also closely related to the Max-Flow Min-Cut
Theorem~\cite{Ford1956}.  There are a number of different ways to see
this; we shall focus on the connection to Ford and Fulkerson's
original work.  Let $G=(V,E,\sigma)$ be a weighted, undirected graph
and let $s,t\in V$ be distinct vertices.  Let $\Gamma$ be the family
of simple paths from $s$ to $t$.  ($\Gamma\subset\Gamma(s,t)$ is an
essential subfamily in the sense of \thmref{Gamma-star}.)  In the
terminology of Ford and Fulkerson, the positive edge function $\sigma$
is called the \emph{capacity} of the edge. A \emph{flow} can be
thought of as a function $\lambda:\Gamma\to[0,\infty)$ with the
property that
\begin{equation*}
  \sum_{\gamma\in\Gamma}\N(\gamma,e)\lambda(\gamma) \le \sigma(e)\qquad\forall e\in E,
\end{equation*}
and the \emph{value} of such a flow the sum of $\lambda(\gamma)$ over
all $\gamma\in\Gamma$.  Thus, Ford and Fulkerson's maximal flow
problem is equivalent to \Eqref{dual-1} and forms a lower bound for
the $1$-modulus of $\Gamma$.

A \emph{disconnecting set} is a subset of edges $D\subseteq E$ with
the property that each $\gamma\in\Gamma$ traverses at least one edge
in $D$, and a \emph{cut} is a disconnecting set that contains no other
disconnecting sets as proper subsets.  The \emph{value} of a
disconnecting set $D$, denoted $v(D)$, is defined as the sum of the
capacities of all edges in $D$.  The Max-Flow Min-Cut Theorem is
stated as follows.
\begin{theorem}[Max-Flow Min-Cut]
  The values of the maximal flow and minimal cut on any graph are
  equal.
\end{theorem}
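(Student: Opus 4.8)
The plan is to prove the theorem by showing that both quantities coincide with the $1$-modulus of the path family, i.e.
\begin{equation*}
  (\text{maximal flow value}) \;=\; \Mod_1(\Gamma) \;=\; (\text{minimal cut value}),
\end{equation*}
where $\Gamma$ is the finite essential family of simple paths from $s$ to $t$, so that $\Mod_1(\Gamma)=\Mod_1(\Gamma(s,t))$. The left equality will be essentially free from the duality theory already developed, the right equality splits into an easy and a hard inequality, and the hard inequality is where the real work of the theorem lives.

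First I would dispatch the maximal-flow side. As already observed, a flow is exactly a feasible $\lambda$ for the dual program \Eqref{dual-1} and its value is the dual objective $\sum_{\gamma\in\Gamma}\lambda(\gamma)$; the maximal-flow problem is therefore \emph{identical} to \Eqref{dual-1}. Since the constraints of \eqref{eq:cvx-general} are affine, strong duality holds for $p=1$ (\remref{strong-duality}), so the optimal value of \Eqref{dual-1} equals $\Mod_1(\Gamma)$. This gives that the maximal flow value equals $\Mod_1(\Gamma)$. Next, for any disconnecting set $D$, the indicator density $\rho_D=\mathbf{1}_D$ lies in $A(\Gamma)$: each $\gamma\in\Gamma$ meets $D$, so $\ell_{\rho_D}(\gamma)=\sum_{e}\N(\gamma,e)\mathbf{1}_D(e)\ge 1$. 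As $\E_1(\rho_D)=\sum_{e\in D}\sigma(e)=v(D)$, minimizing over disconnecting sets yields $\Mod_1(\Gamma)\le(\text{min-cut})$. Combined with the previous line, this already recovers weak duality $(\text{max-flow})\le(\text{min-cut})$.

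The hard direction is $(\text{min-cut})\le\Mod_1(\Gamma)$, and my plan is to round the extremal density to an integral cut by a discrete coarea argument. Let $\rho^*$ be an extremal density for the $1$-modulus (it exists by \lemref{rho-star} and satisfies $0\le\rho^*\le 1$ by \lemref{rho-bounds}), and, exactly as in the proof of \thmref{density-potential}, let $\phi^*$ be the induced potential, truncated at $1$, so that $\phi^*(s)=0$, $\phi^*(t)=1$, $0\le\phi^*\le 1$, and the triangle estimate \Eqref{rhop-lt-rhos} gives $|\phi^*(x)-\phi^*(y)|\le\rho^*(x,y)$ on every edge. For $\theta\in[0,1)$ set
\begin{equation*}
  D_\theta := \bigl\{(x,y)\in E : \min(\phi^*(x),\phi^*(y))\le\theta<\max(\phi^*(x),\phi^*(y))\bigr\}.
\end{equation*}
Since every $\gamma\in\Gamma$ runs from level $0$ to level $1$, it must cross level $\theta$, so each $D_\theta$ is disconnecting. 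The measure of the set of levels $\theta$ at which a given edge lies in $D_\theta$ is exactly $|\phi^*(x)-\phi^*(y)|$, whence
\begin{equation*}
  \int_0^1 v(D_\theta)\,d\theta
  = \sum_{(x,y)\in E}\sigma(x,y)\,|\phi^*(x)-\phi^*(y)|
  \le \sum_{(x,y)\in E}\sigma(x,y)\,\rho^*(x,y)
  = \E_1(\rho^*) = \Mod_1(\Gamma).
\end{equation*}
Because the average over $\theta$ is at most $\Mod_1(\Gamma)$, some level $\theta^*$ satisfies $v(D_{\theta^*})\le\Mod_1(\Gamma)$; passing to a cut contained in $D_{\theta^*}$ only lowers the value, so $(\text{min-cut})\le\Mod_1(\Gamma)$.

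Chaining the three estimates gives $(\text{max-flow})=\Mod_1(\Gamma)=(\text{min-cut})$, which is the theorem. The main obstacle is the last paragraph: the extremal density for $p=1$ is in general non-integral, and the content of Max-Flow Min-Cut is precisely the integrality (rounding) that turns a fractional optimum into an honest cut. The coarea/level-set step is doing exactly this work, while the remaining links are either strong duality or one-line feasibility checks.
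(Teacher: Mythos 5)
Your proposal is correct, but it is not comparable to a proof in the paper for a simple reason: the paper does not prove this theorem at all. Max-Flow Min-Cut is quoted as a classical result of Ford and Fulkerson~\cite{Ford1956}, and is then \emph{used} as an ingredient in the proof of \thmref{1-mod}, where the paper only establishes the two easy bounds $(\text{max-flow})\le\Mod_1(\Gamma)\le(\text{min-cut})$ (weak duality plus the indicator density of a disconnecting set) and invokes the cited theorem to collapse the chain. Your argument supplies exactly the missing hard step: strong duality for the $p=1$ program (legitimately available from \remref{strong-duality}, which rests on Rockafellar and not on any flow theory, so there is no circularity) gives $(\text{max-flow})=\Mod_1(\Gamma)$, and your coarea/level-set rounding of the extremal density gives $(\text{min-cut})\le\Mod_1(\Gamma)$, which is precisely the integrality content of the theorem. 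As a consequence, your route makes the paper self-contained and yields \thmref{1-mod} as a byproduct rather than a corollary of a cited result. One detail deserves to be written out rather than waved at: you import the potential construction and the estimate \Eqref{rhop-lt-rhos} from the proof of \thmref{density-potential}, but that theorem is stated for $1<p<\infty$ and its conclusion uses uniqueness of $\rho^*$, which fails at $p=1$. You correctly borrow only the inequality $|\phi^*(x)-\phi^*(y)|\le\rho^*(x,y)$, whose proof is $p$-independent, but your truncation of $\phi^*$ at $1$ requires the short additional check that truncation preserves this edge-Lipschitz bound (e.g., if $\phi^*(x)\le 1<\psi(y)$ for the untruncated $\psi$, then $1<\psi(y)\le\phi^*(x)+\rho^*(x,y)$ still gives the bound); this is elementary but is the one seam in the argument that a referee would ask you to close.
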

The connection between this and the $1$-modulus is established in the
following theorem.
\begin{theorem}\thmlabel{1-mod}
  Let $G=(V,E,\sigma)$ be a weighted, undirected graph, let $s,t\in V$
  be two distinct vertices and let $\Gamma(s,t)$ be the connecting
  family of walks between $s$ and $t$.  Then $\Mod_1(\Gamma(s,t))$ is
  equal to the value of maximum flow (and minimum cut) from $s$ to
  $t$.
\end{theorem}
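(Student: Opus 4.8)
The plan is to trap $\Mod_1(\Gamma(s,t))$ between the minimum-cut value from above and the maximum-flow value from below, and then to collapse the two bounds using the Max-Flow Min-Cut theorem. Since the simple paths $\Gamma$ form an essential subfamily of $\Gamma(s,t)$ (\thmref{Gamma-star}), the notions of flow and cut defined through $\Gamma$ are exactly the ones relevant to $\Mod_1(\Gamma(s,t))$, and I will establish each inequality by working directly with the admissible densities and dual variables for $\Gamma(s,t)$.

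For the upper bound I would test, for an arbitrary cut $D\subseteq E$, the density $\rho=\mathbf{1}_D$, the indicator of $D$. The point to check is that $D$ intercepts not merely the simple paths but \emph{every} walk in $\Gamma(s,t)$: a walk from $s$ to $t$ that avoided $D$ would lie wholly in the subgraph $(V,E\setminus D)$, which would then contain a simple $s$--$t$ path avoiding $D$, contradicting that $D$ is disconnecting. Hence $\ell_\rho(\gamma)=\sum_{e\in E}\N(\gamma,e)\mathbf{1}_D(e)\ge 1$ for all $\gamma\in\Gamma(s,t)$, so $\rho\in A(\Gamma(s,t))$ and $\Mod_1(\Gamma(s,t))\le\E_1(\mathbf{1}_D)=\sum_{e\in D}\sigma(e)=v(D)$. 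Taking the infimum over cuts gives $\Mod_1(\Gamma(s,t))\le$ the minimum-cut value.

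For the lower bound I would invoke weak duality (\lemref{weak-duality}) together with the explicit $p=1$ dual \eqref{eq:dual-1}. Any Ford--Fulkerson flow $\lambda$ is precisely a feasible point of \eqref{eq:dual-1} (extended by zero to the non-simple walks), and choosing $\mu(e)=\sigma(e)-\sum_{\gamma}\N(\gamma,e)\lambda(\gamma)\ge 0$ makes $\F_1(\lambda,\mu)=\sum_{\gamma}\lambda(\gamma)$ equal to the value of the flow. Weak duality then yields $\Mod_1(\Gamma(s,t))\ge\sum_{\gamma}\lambda(\gamma)$ for every flow, and maximizing over flows gives $\Mod_1(\Gamma(s,t))\ge$ the maximum-flow value.

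Combining the two estimates with the Max-Flow Min-Cut theorem, I obtain $\text{max-flow}\le\Mod_1(\Gamma(s,t))\le\text{min-cut}=\text{max-flow}$, which forces equality throughout. I expect the only nonroutine step to be the admissibility check in the upper bound, where one must guarantee that a cut defined via simple paths still blocks every walk, including those that revisit vertices or traverse edges more than once; the reduction-to-a-simple-path argument handles this. As an alternative to weak duality, one could apply strong duality (\remref{strong-duality}) to the finite family $\Gamma$ to identify $\Mod_1(\Gamma)$ with the maximum-flow value at once, but the two-sided bounding argument has the merit of exhibiting both an extremal cut-density and an optimal flow explicitly.
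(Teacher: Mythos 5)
Your proposal is correct and follows essentially the same route as the paper: the lower bound comes from weak duality via the $p=1$ dual \eqref{eq:dual-1} (which is Ford--Fulkerson's flow problem), the upper bound from testing the indicator density of a disconnecting set, and Max-Flow Min-Cut closes the gap. The only difference is cosmetic---you explicitly verify that a cut blocks every walk in $\Gamma(s,t)$ (not just the simple paths), a detail the paper's proof asserts without comment, so if anything your write-up is slightly more complete.
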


\begin{proof}
  Let $\Gamma\subset\Gamma(s,t)$ be the family of simple paths
  connecting $s$ to $t$.  $\Gamma$ is an essential subfamily and so
  $\Mod_1(\Gamma(s,t))=\Mod_1(\Gamma)$, which is bounded below by the
  value of the maximum flow, as we saw above.

  Let $D\subseteq E$ be a disconnecting set of minimal value $v(D)$
  and define the edge density $\rho$ as
  \begin{equation*}
    \rho(e) :=
    \begin{cases}
      1 & \text{if }e\in D\\
      0 & \text{otherwise}
    \end{cases}
  \end{equation*}
  Since $D$ is a disconnecting set, $\rho\in A(\Gamma(s,t))$.
  Moreover,
  \begin{equation*}
    \Mod_1(\Gamma(s,t)) \le \E_1(\rho) = \sum_{e\in E}\sigma(e)\rho(e) = \sum_{e\in D}\sigma(e) = v(D),
  \end{equation*}
  providing the opposite inequality.
\end{proof}

\section{Dependence on the parameter $p$}\seclabel{p-dependence}

The results of the preceding section are that the $p$-modulus can be
thought of as a generalization of shortest path ($p=\infty$),
effective conductance ($p=2$), and max-flow or min-cut ($p=1$).
Another way to see this is to consider an undirected, unweighted graph
$G$ and let $\Gamma=\Gamma(s,t)$ be the connecting family of walks
between distinct vertices $s$ and $t$.  (Recall the example in
\secref{example}.) In this setting, $\Mod_\infty(\Gamma)$ equals to
the reciprocal of the length of the shortest path in $\Gamma$, while
$\Mod_1(\Gamma)$ equals to the maximum number of edge-disjoint paths
from $s$ to $t$ (a consequence of \thmref{1-mod} and Menger's
theorem).  $\Mod_2(\Gamma)$ strikes a balance between the other two,
in a sense preferring both shortness and abundance in the collection
of paths connecting $s$ to $t$.

In fact, as the present section demonstrates, the parameter $p$ can be
thought of as a continuous ``tuning parameter'', which balances the
sensitivity of the modulus between walk length and walk abundance.
More precisely, given any graph $G=(V,E,\sigma)$ and any nonempty
family of walks $\Gamma$, the function $p\mapsto\Mod_p(\Gamma)$ is
continuous and decreasing.  Moreover, $\Mod_p(\Gamma)^{1/p}$ converges
to $\Mod_\infty(\Gamma)$ as $p\to\infty$.  In fact, with the correct
normalization, the convergence is monotone (see~\thmref{convergence-infinity}).

We begin with bounds on the modulus.

\begin{lemma} \lemlabel{mod-bounds}

  Let $G=(V,E,\sigma)$ be a graph with weights $\sigma>0$.  Let the
  family $\Gamma$ and the parameter $1\le p<\infty$ be given. Then
  \begin{equation*}
    \frac{\sigma_{\min}}{\ell^p} \le \Mod_p(\Gamma) \le \frac{\sigma(E)}{\ell ^p},
  \end{equation*}
  where $\ell:=\ell(\Gamma)$, 
  \begin{equation*}
    \sigma_{\min}:=\min_{e\in E}\sigma(e),\quad\text{and}\quad
    \sigma(E) := \sum_{e\in E}\sigma(e).
  \end{equation*}
\end{lemma}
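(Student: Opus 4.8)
The plan is to prove the two inequalities separately, bounding $\Mod_p(\Gamma)=\inf_{\rho\in A(\Gamma)}\E_p(\rho)$ from above by exhibiting one convenient admissible density, and from below by showing that \emph{every} admissible density is forced to be large on at least one edge. A preliminary observation I would record is that, because graph length takes values in the positive integers, the infimum defining $\ell(\Gamma)$ is actually attained: there is a walk $\gamma'\in\Gamma$ with $\ell(\gamma')=\ell(\Gamma)=\ell$. This minimizing walk is the object both bounds will exploit.

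For the upper bound I would reuse the constant test density $\rho_0\equiv 1/\ell$ already analyzed in the proof of \thmref{inf-mod}. There it is verified that $\rho_0\in A(\Gamma)$, since for every $\gamma\in\Gamma$ one has $\ell_{\rho_0}(\gamma)=\ell(\gamma)/\ell\ge\ell(\Gamma)/\ell=1$. Admissibility then gives immediately
\begin{equation*}
  \Mod_p(\Gamma)\le\E_p(\rho_0)=\sum_{e\in E}\sigma(e)\Bigl(\frac{1}{\ell}\Bigr)^p=\frac{\sigma(E)}{\ell^p}.
\end{equation*}

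For the lower bound I would take an arbitrary $\rho\in A(\Gamma)$ (so $\rho\ge 0$) and apply the admissibility inequality to the minimizing walk $\gamma'$: $\sum_{e\in E}\N(\gamma',e)\rho(e)=\ell_\rho(\gamma')\ge 1$. Since $\gamma'$ makes exactly $\ell$ hops, $\sum_{e\in E}\N(\gamma',e)=\ell$, so the weighted average of $\rho$ over the traversed hops is at least $1/\ell$; hence some edge $e^*$ traversed by $\gamma'$ satisfies $\rho(e^*)\ge 1/\ell$. Because all summands in $\E_p$ are nonnegative and $\sigma(e^*)\ge\sigma_{\min}$, discarding every term but $e^*$ yields $\E_p(\rho)\ge\sigma(e^*)\rho(e^*)^p\ge\sigma_{\min}/\ell^p$, and taking the infimum over $\rho\in A(\Gamma)$ gives the claim. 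I do not expect a genuine obstacle here; the one subtlety worth care is that the lower bound is deliberately crude, so a single-edge estimate suffices and there is no need for the sharper Hölder inequality (which would instead produce $\ell^{1-p}$), and that the averaging argument must be phrased via the multiplicities $\N(\gamma',e)$ so that it remains valid even when the shortest walk $\gamma'$ is not simple.
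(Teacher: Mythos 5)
Your proposal is correct and follows essentially the same route as the paper: the constant density $\rho_0\equiv 1/\ell$ gives the upper bound, and admissibility applied to a shortest walk $\gamma'$ forces some traversed edge to satisfy $\rho(e)\ge 1/\ell$, yielding the lower bound via a single-edge estimate with $\sigma_{\min}$. The only cosmetic difference is that the paper cites its earlier inequality \eqref{eq:rho-max-bound} (a max bound) where you re-derive the same fact by averaging over the $\ell$ hops of $\gamma'$.
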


\begin{proof}
  The upper bound can be obtained by considering the edge density
  $\rho\equiv\frac{1}{\ell}$.  This density is admissible, because for
  all $\gamma\in\Gamma$,
  $\ell_\rho(\gamma) = \frac{1}{\ell}\ell(\gamma) \ge 1$.  Thus
  \begin{equation*}
    \Mod_p(\Gamma) \le \E_p(\rho) = \sum_{e\in E}\sigma(e)\frac{1}{\ell^p}
    = \frac{\sigma(E)}{\ell^p}.
  \end{equation*}
  
  The lower bound can be obtained as follows.  Let $\gamma'\in\Gamma$ such that $\ell(\gamma')=\ell(\Gamma)=\ell$.  By \Eqref{rho-max-bound}, for every $\rho\in A(\Gamma)$ there exists an edge $\tilde{e}\in E$ such that $\rho(\tilde{e})\ge\ell^{-1}$.  Thus,
  \begin{equation*}
    \E_p(\rho) = \sum_{e\in E}\sigma(e)\rho(e)^p \ge \sigma_{\min}\rho(\tilde{e})^p \ge 
    \frac{\sigma_{\min}}{\ell^p}.
  \end{equation*}
\end{proof}

\begin{theorem}\thmlabel{convergence-infinity}
  Let $G=(V,E,\sigma)$ and $\Gamma$ be given.  Then
  \begin{equation}\eqlabel{mod-p-limit}
    \lim_{p\to\infty} \Mod_p(\Gamma)^{\frac{1}{p}} = \frac{1}{\ell(\Gamma)} = \Mod_\infty(\Gamma).
  \end{equation}
  Moreover, the following monotonicity properties holds for $1\le p<q<\infty$:
\begin{gather}
\eqlabel{regular-monotonicity}
\Mod_q(\Gamma)\leq \Mod_p(\Gamma),\\
\eqlabel{holders-mod}
\sigma(E)^{-1/p}\Mod_p(\Gamma) ^{1/p}\leq \sigma(E)^{-1/q}\Mod_q(\Gamma)^{1/q}.
\end{gather}
\end{theorem}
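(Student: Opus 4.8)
The plan is to handle the three assertions separately, beginning with the limit \eqref{eq:mod-p-limit}, which follows almost immediately from the two-sided estimate just established. Taking $p$-th roots throughout \lemref{mod-bounds} gives
\begin{equation*}
  \frac{\sigma_{\min}^{1/p}}{\ell} \le \Mod_p(\Gamma)^{1/p} \le \frac{\sigma(E)^{1/p}}{\ell},
\end{equation*}
where $\ell=\ell(\Gamma)$. Since $\sigma_{\min}$ and $\sigma(E)$ are fixed positive constants, both $\sigma_{\min}^{1/p}$ and $\sigma(E)^{1/p}$ tend to $1$ as $p\to\infty$, so the squeeze theorem forces $\Mod_p(\Gamma)^{1/p}\to 1/\ell$. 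Combined with \thmref{inf-mod}, which identifies $\Mod_\infty(\Gamma)=1/\ell(\Gamma)$, this establishes \eqref{eq:mod-p-limit}.

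For the regular monotonicity \eqref{eq:regular-monotonicity}, the key observation is that by \remref{A-sets} the $p$-modulus may be computed over the restricted set $A^*(\Gamma)$, so there is an extremal density $\rho^*$ satisfying $0\le\rho^*\le 1$ on every edge. Because $p<q$ and $0\le\rho^*(e)\le 1$, we have $\rho^*(e)^q\le\rho^*(e)^p$ for each $e$. Since admissibility does not depend on the exponent, $\rho^*$ remains admissible for the $q$-problem, and therefore $\Mod_q(\Gamma)\le\E_q(\rho^*)=\sum_e\sigma(e)\rho^*(e)^q\le\sum_e\sigma(e)\rho^*(e)^p=\E_p(\rho^*)=\Mod_p(\Gamma)$, which is the claim.

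The remaining inequality \eqref{eq:holders-mod} is the one requiring the most care, and I would prove it with a single application of H\"older's inequality to the extremal density for the larger exponent. Let $\rho^*$ be extremal for $\Mod_q(\Gamma)$, so that $\Mod_p(\Gamma)\le\E_p(\rho^*)=\sum_e\sigma(e)\,\rho^*(e)^p$. Writing each summand as $\sigma(e)^{1-p/q}\cdot\bigl(\sigma(e)^{p/q}\rho^*(e)^p\bigr)$ and applying H\"older with conjugate exponents $q/(q-p)$ and $q/p$, the first factor collapses to $\sigma(E)^{(q-p)/q}$ and the second to $\bigl(\sum_e\sigma(e)\rho^*(e)^q\bigr)^{p/q}=\Mod_q(\Gamma)^{p/q}$. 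Hence $\Mod_p(\Gamma)\le\sigma(E)^{(q-p)/q}\Mod_q(\Gamma)^{p/q}$; taking $p$-th roots and using $(q-p)/(pq)=1/p-1/q$ rearranges exactly into \eqref{eq:holders-mod}. The only real subtlety is the bookkeeping of exponents: the weight $\sigma(e)$ must be split in the proportion $1-p/q$ to $p/q$, after which the conjugacy relation $(q-p)/q+p/q=1$ holds automatically and every surviving power of $\sigma$ consolidates into $\sigma(E)$.
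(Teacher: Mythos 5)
Your proposal is correct and follows essentially the same route as the paper's own proof: taking $p$-th roots of the bounds in \lemref{mod-bounds} for \eqref{eq:mod-p-limit}, using the bound $0\le\rho\le 1$ on $A^*(\Gamma)$ for \eqref{eq:regular-monotonicity}, and applying H\"older's inequality with conjugate exponents $q/p$ and $q/(q-p)$ to the extremal density of the $q$-problem for \eqref{eq:holders-mod}. The only cosmetic difference is that you phrase the first two steps via extremal densities rather than arbitrary admissible ones, which changes nothing of substance.
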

\begin{remark}
Observe that \eqref{eq:mod-p-limit} and \eqref{eq:holders-mod} combine to yield
  \begin{equation}\eqlabel{mod-p-limit-normalized}
    \sigma(E)^{-1/p}\Mod_p(\Gamma)^{1/p}\uparrow\Mod_\infty(\Gamma)\quad\text{as }p\to\infty.
  \end{equation}
\end{remark}

\begin{proof}
  Taking the $p$th root of the inequalities in \lemref{mod-bounds}
  proves \eqref{eq:mod-p-limit}.  \Eqref{regular-monotonicity} follows
  from the fact that for any $\rho\in A^*(\Gamma)$ we have
  $0\le\rho\le 1$, and since $q > p$, this implies that
  \begin{equation}
    \eqlabel{rho-q-lt-rho-p}
    \E_q(\rho) = \sum_{e\in E}\sigma(e)\rho(e)^q \le \sum_{e\in E}\sigma(e)\rho(e)^p = \E_p(\rho).
  \end{equation}
  To prove monotonicity in~\eqref{eq:holders-mod} we use H\"older's
  inequality. Given $\rho\in A^*(\Gamma)$, assume that $1\le
  p<q<\infty$. Then, using the conjugate exponents $q/p$ and $q/(q-p)$,
  \begin{equation}
    \eqlabel{holder-energy-est}
    \begin{split}
      \E_p(\rho) &= \sum_{e\in E}\sigma(e)\rho(e)^p = \sum_{e\in E}
      \sigma(e)^{p/q}\rho(e)^p\cdot\sigma(e)^{1-p/q} \\ & \le
      \left(\sum_{e\in E}\sigma(e)\rho(e)^q\right)^{p/q}
      \left(\sum_{e\in E}\sigma(e)\right)^{1-p/q} =
      \sigma(E)^{1-p/q}\E_q(\rho)^{p/q}.
    \end{split}
  \end{equation}
This can be rewritten as
\begin{equation}\eqlabel{holders-energy}
\sigma(E)^{-1/p}\E_p(\rho) ^{1/p}\leq \sigma(E)^{-1/q}\E_q(\rho) ^{1/q}.
\end{equation}
Picking $\rho$ to be extremal for $\Mod_q(\Gamma)$ we get~\eqref{eq:holders-mod}.
\end{proof}

\begin{theorem}\thmlabel{mod-cont}
  Let $G=(V,E,\sigma)$ and $\Gamma$ be given.  Then the the function
  \begin{equation*}
    p\mapsto \Mod_p(\Gamma)
  \end{equation*}
  is continuous for $1\le p < \infty$.
\end{theorem}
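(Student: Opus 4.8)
The plan is to establish continuity by combining monotonicity with an upper-semicontinuity argument built from a single fixed test density. First I would recall two facts already available. From \thmref{convergence-infinity}, Equation~\eqref{eq:regular-monotonicity}, the function $p \mapsto \Mod_p(\Gamma)$ is monotone decreasing. A monotone function on an interval has at most countably many discontinuities, and at each point its one-sided limits exist; the task therefore reduces to ruling out jumps. Second, by \thmref{Gamma-star} I may replace $\Gamma$ by a finite essential subfamily, so $\Mod_p(\Gamma)$ is the infimum of $\E_p(\rho)$ over the finite-dimensional set $A^*(\Gamma)$, and by \lemref{rho-bounds} the extremal density $\rho_p^*$ lives in the compact set $A^*(\Gamma) = \{\rho : \ell_\rho(\Gamma) \ge 1,\ 0 \le \rho \le 1\}$.

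The key idea is to fix a parameter $p_0$ and its extremal density $\rho_{p_0}^* \in A^*(\Gamma)$, which satisfies $0 \le \rho_{p_0}^* \le 1$, and use it as a competitor for nearby $p$. Since $\rho_{p_0}^*$ is admissible for every $p$, we have $\Mod_p(\Gamma) \le \E_p(\rho_{p_0}^*)$ for all $p$. The map $p \mapsto \E_p(\rho_{p_0}^*) = \sum_{e \in E}\sigma(e)\rho_{p_0}^*(e)^p$ is a finite sum of continuous functions of $p$ (each term $\sigma(e)\rho_{p_0}^*(e)^p$ is continuous in $p$ because $0 \le \rho_{p_0}^*(e) \le 1$), hence continuous. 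This gives
\begin{equation*}
  \limsup_{p \to p_0} \Mod_p(\Gamma) \le \limsup_{p \to p_0} \E_p(\rho_{p_0}^*) = \E_{p_0}(\rho_{p_0}^*) = \Mod_{p_0}(\Gamma),
\end{equation*}
which is upper semicontinuity. For the reverse inequality I would invoke the compactness of $A^*(\Gamma)$: take a sequence $p_n \to p_0$ and corresponding extremal densities $\rho_{p_n}^*$; by compactness a subsequence converges to some $\bar\rho \in A^*(\Gamma)$, and the joint continuity of $(p,\rho) \mapsto \E_p(\rho)$ on the compact box (again using $0 \le \rho \le 1$) together with $\Mod_{p_n}(\Gamma) = \E_{p_n}(\rho_{p_n}^*)$ forces $\liminf_{n} \Mod_{p_n}(\Gamma) = \E_{p_0}(\bar\rho) \ge \Mod_{p_0}(\Gamma)$, giving lower semicontinuity.

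The main obstacle I anticipate is handling the uniformity of convergence $\E_{p_n}(\rho_{p_n}^*) \to \E_{p_0}(\bar\rho)$ carefully, since both the exponent and the density vary simultaneously. This is where confining everything to the restricted admissible set $A^*(\Gamma)$, where $0 \le \rho \le 1$, does the essential work: on this box the function $(p,\rho) \mapsto \sum_{e} \sigma(e)\rho(e)^p$ is jointly continuous and the sum is finite, so no delicate interchange of limits is required and the convergence is automatic. One subtle point worth checking is behavior at the left endpoint $p=1$, where continuity should be interpreted as right-continuity; the same fixed-competitor and compactness arguments apply there without modification.
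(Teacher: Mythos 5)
Your proposal is correct, but it takes a genuinely different route from the paper. The paper never touches extremal densities or compactness in this proof: it sandwiches $\Mod_q(\Gamma)$ between $\Mod_p(\Gamma)$ and $\sigma(E)^{1-q/p}\Mod_p(\Gamma)^{q/p}$ using the two monotonicity inequalities \eqref{eq:regular-monotonicity} and \eqref{eq:holders-mod} already established in \thmref{convergence-infinity} (the second coming from H\"older's inequality), and then lets $q\to p$ from each side. Your argument is instead variational: upper semicontinuity from the fixed competitor $\rho_{p_0}^*$ (valid because admissibility does not depend on $p$, and $0\le\rho_{p_0}^*\le 1$ makes $p\mapsto\E_p(\rho_{p_0}^*)$ continuous), and lower semicontinuity from compactness of $A^*(\Gamma)$ together with joint continuity of $(p,\rho)\mapsto\E_p(\rho)$ on $[1,\infty)\times[0,1]^E$. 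Both are sound. The paper's sandwich is shorter given the machinery already in place and yields quantitative pinching bounds rather than bare continuity; yours is softer but more self-contained (no H\"older needed), generalizes to any setting with a compact admissible set and jointly continuous energy, and produces a byproduct the paper only obtains later: subsequential limits of the extremal densities $\rho_{p_n}^*$ are extremal for $p_0$, which is close in spirit to the continuity of $p\mapsto\rho_p$ proved in \secref{rho-properties}. Two cosmetic points: the appeal to \thmref{Gamma-star} is unnecessary, since $A^*(\Gamma)$ is an intersection of closed half-spaces with the box $[0,1]^E$ and hence compact regardless of whether the constraint set is finite; and in the lower-semicontinuity step you should first pass to a subsequence of $(p_n)$ realizing the $\liminf$ and only then extract a convergent subsequence of densities---a standard reordering that your argument clearly accommodates.
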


\begin{proof}
If we fix $p$ and let $q\downarrow p$, from \eqref{eq:regular-monotonicity} we get
\[
\limsup_{q\downarrow p}\Mod_q(\Gamma) \leq \Mod_p(\Gamma),
\]
and from \eqref{eq:holders-mod} we get
\[
\liminf_{q\downarrow p}\Mod_q(\Gamma)\geq \liminf_{q\downarrow p}\sigma(E)^{1-q/p}\Mod_p(\Gamma)^{q/p}=\Mod_p(\Gamma).
\]
So $\lim_{q\downarrow p}\Mod_q(\Gamma)=\Mod_p(\Gamma)$.

Likewise, if $q\uparrow p$, then from \eqref{eq:regular-monotonicity} we get
\[
\liminf_{q\uparrow p} \Mod_q(\Gamma) \geq \Mod_p(\Gamma),
\]
and from \eqref{eq:holders-mod} we get
\[
\limsup_{q\uparrow p} \Mod_q(\Gamma) \leq \limsup_{q\uparrow p} \sigma(E)^{1-q/p}\Mod_p(\Gamma)^{q/p}=\Mod_p(\Gamma).
\]
So $\lim_{q\rightarrow p}\Mod_q(\Gamma)=\Mod_p(\Gamma)$.
\end{proof}

\section{Properties of extremal densities}\seclabel{rho-properties}

We end with two properties of the extremal density in modulus
problems: it is continuous as a function of $p$, and it is closely
related to the gradient of the modulus with respect to the weights
$\sigma$.

\subsection{Continuity of the extremal density}
This continuity result arises from Clarkson's inequalities.
(See Theorem~2 of~\cite{Clarkson1936}.)  For a given $p\ge 1$, let
$\|\cdot\|_p$ represent the standard $p$-norm on $\mathbb{R}^m$,
defined on a density $\rho:E\to\mathbb{R}$ as
\begin{equation*}
  \|\rho\|_p := \left(\sum_{e\in E}|\rho(e)|^p\right)^{\frac{1}{p}}.
\end{equation*}
In this context, the relevant Clarkson inequalities take the following
form.
\begin{theorem}\thmlabel{clark}
  Let $p>1$,  $p'=p/(p-1)$,  $x,\ y\in\mathbb{R}^m$, and  $\|\cdot\|_p$ be the standard $p$-norm. 

\noindent If
  $p\ge 2$, then
  \begin{equation*}
    \left\|\frac{x+y}{2}\right\|_p^p + \left\|\frac{x-y}{2}\right\|_p^p \le
    \frac{1}{2}\|x\|_p^p + \frac{1}{2}\|y\|_p^p.
  \end{equation*}
  If $1<p\le 2$, then
  \begin{equation*}
    \left\|\frac{x+y}{2}\right\|_p^{p'} + \left\|\frac{x-y}{2}\right\|_p^{p'} \le
    \left(\frac{1}{2}\|x\|_p^p + \frac{1}{2}\|y\|_p^p\right)^{\frac{p'}{p}}.
  \end{equation*}
\end{theorem}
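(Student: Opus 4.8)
The plan is to prove both inequalities by reducing them to a pointwise statement on each coordinate $e\in E$ and then reassembling, handling the two ranges of $p$ separately; throughout I use that $\|x\|_p^p=\sum_{e\in E}|x_e|^p$ and that the scalars $x_e,y_e$ are real.

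For $p\ge 2$ I would first establish the scalar inequality that, for all real $a,b$,
\[
  \left|\frac{a+b}{2}\right|^p + \left|\frac{a-b}{2}\right|^p \le \frac{1}{2}|a|^p + \frac{1}{2}|b|^p,
\]
and then sum it over $e\in E$ with $a=x_e$, $b=y_e$, which reproduces the stated inequality term by term. The scalar statement I would obtain by chaining three elementary facts: the parallelogram identity $|a+b|^2+|a-b|^2=2(|a|^2+|b|^2)$; the superadditivity $s^{r}+t^{r}\le(s+t)^{r}$ for $r=p/2\ge 1$ and $s,t\ge 0$, applied with $s=|a+b|^2$ and $t=|a-b|^2$; and the convexity of $\tau\mapsto\tau^{p/2}$, which gives $(|a|^2+|b|^2)^{p/2}\le 2^{p/2-1}(|a|^p+|b|^p)$. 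Together these yield $|a+b|^p+|a-b|^p\le 2^{p-1}(|a|^p+|b|^p)$, i.e.\ the scalar inequality after dividing by $2^p$. This case is routine.

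For $1<p\le 2$ summation no longer lifts the estimate directly, since the two sides of the target carry the exponents $p'$ and $p'/p$, so I would split the argument into a scalar core and a vector assembly. The scalar core asserts that, for all real $a,b$,
\[
  \left|\frac{a+b}{2}\right|^{p'} + \left|\frac{a-b}{2}\right|^{p'} \le \left(\frac{|a|^p+|b|^p}{2}\right)^{p'/p}.
\]
Writing $P_e=\big|\tfrac{x_e+y_e}{2}\big|^{p}$, $M_e=\big|\tfrac{x_e-y_e}{2}\big|^{p}$, $S_e=\tfrac12(|x_e|^p+|y_e|^p)$ and $r=p'/p=1/(p-1)$, the scalar core says $P_e^{\,r}+M_e^{\,r}\le S_e^{\,r}$, that is $\|(P_e,M_e)\|_r\le S_e$, while the target inequality is exactly $\|(\sum_{e\in E}P_e,\sum_{e\in E}M_e)\|_r\le\sum_{e\in E}S_e$. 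The latter then follows by chaining Minkowski's inequality
\[
  \left\|\left(\sum_{e\in E}P_e,\ \sum_{e\in E}M_e\right)\right\|_r \le \sum_{e\in E}\|(P_e,M_e)\|_r,
\]
valid because $r=1/(p-1)\ge 1$ for $1<p\le 2$, with the scalar bound $\|(P_e,M_e)\|_r\le S_e$.

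The hard part will be the scalar core for $1<p\le 2$; the $p\ge 2$ case and the Minkowski assembly above are both routine. Substituting $u=\tfrac{a+b}{2}$, $v=\tfrac{a-b}{2}$ and raising to the power $p/p'<1$ recasts the scalar core as
\[
  \left(|u|^{p'}+|v|^{p'}\right)^{p/p'} \le \frac{|u+v|^p+|u-v|^p}{2}.
\]
Both sides depend only on $|u|$ and $|v|$, so I may take $u,v\ge 0$; by homogeneity and symmetry this reduces to $u=1$, $v=t\in[0,1]$, giving $(1+t^{p'})^{p/p'}\le\tfrac12\big((1+t)^p+(1-t)^p\big)$. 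One checks that the two sides agree at $t=0$ and at $t=1$ (using $p/p'=p-1$), and that near $t=0$ the right side exceeds the left, since its leading correction is of order $t^2$ whereas that of the left side is of the higher order $t^{p'}$ with $p'>2$. Promoting these endpoint and local observations to the inequality on all of $[0,1]$ is the delicate global step, and is where I would expect to concentrate the effort, following the one-variable estimates of \cite{Clarkson1936}.
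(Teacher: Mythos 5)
Your $p\ge 2$ argument is complete and correct: the scalar inequality follows from the parallelogram identity, superadditivity of $s\mapsto s^{p/2}$ on nonnegative reals, and convexity of $\tau\mapsto\tau^{p/2}$, and it sums coordinate-wise to the vector statement. Your assembly step for $1<p\le 2$ is also correct and clean: with $r=p'/p=1/(p-1)\ge 1$, Minkowski's inequality in the $\ell^r$ norm on $\mathbb{R}^2$ does reduce the vector inequality to the two-point scalar inequality $P_e^r+M_e^r\le S_e^r$. (For comparison, the paper proves none of this; it simply cites Theorem~2 of \cite{Clarkson1936}, so any complete self-contained proof would go beyond what the paper does.)

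The genuine gap is the scalar core for $1<p\le 2$, which after your reductions reads
\begin{equation*}
  \left(1+t^{p'}\right)^{p-1} \le \frac{(1+t)^p+(1-t)^p}{2},\qquad t\in[0,1],
\end{equation*}
and which you explicitly leave unproven. This is not a routine omission: it is the entire analytic content of Clarkson's second inequality, and it is precisely the step to which Clarkson devotes most of his effort. Your observations---equality at $t=0$ and $t=1$, and domination of the right side near $t=0$ because its correction is of order $t^2$ while the left side's is of the higher order $t^{p'}$ with $p'>2$---are consistent with the inequality but do not imply it: two functions that agree at both endpoints and are correctly ordered near one endpoint can still cross in the interior, so pointing to these local facts establishes nothing globally. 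Some genuine global argument is required, for instance Clarkson's own one-variable analysis, or the derivative and monotonicity arguments found in standard functional analysis texts; note also that the naive route of comparing power series term by term is delicate here because the binomial coefficients $\binom{p-1}{k}$ alternate in sign for non-integer $p-1\in(0,1)$. As it stands, your proposal proves the easy half ($p\ge 2$) and correctly isolates the hard half's essential kernel, but does not prove that kernel.
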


A first consequence of Clarkson's inequalities is that admissible
densities that are almost-minimizers of the energy are close to the
extremal density in $p$-norm.

\begin{lemma}\lemlabel{clark}
  Let $G=(V,E,\sigma)$ be a graph with $0<\sigma_{\min}\le\sigma$, and
  let $\Gamma$ be a family of walks on $G$. Assume $p>1$,   and let $\rho^*$ be the
  unique extremal density for the $p$-modulus of $\Gamma$. 
  Then, given any admissible density $\rho\in A(\Gamma)$ the following holds.

\noindent If $p\ge 2$,
  then
  \begin{equation}
    \eqlabel{clark-gt2}
    \|\rho-\rho^*\|_p^p\le\frac{2^{p-1}}{\sigma_{\min}}\left(
      \E_p(\rho)-\Mod_p(\Gamma)
    \right).
  \end{equation}
  If $1<p\le 2$, then
  \begin{equation}
    \eqlabel{clark-lt2}
    \|\rho-\rho^*\|_p^{p'} \le \left(\frac{2^p}{\sigma_{\min}}\right)^{\frac{p'}{p}}\left[
      \left(
        \Mod_p(\Gamma) + \frac{1}{2}\big(
          \E_p(\rho) - \Mod_p(\Gamma)
        \big)
        \right)^{\frac{p'}{p}}
        - \Mod_p(\Gamma)^{\frac{p'}{p}}
    \right].
  \end{equation}
\end{lemma}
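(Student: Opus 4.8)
The plan is to apply Clarkson's inequalities (\thmref{clark}) not to $\rho$ and $\rho^*$ directly, but to the rescaled densities $u,v:E\to\mathbb{R}$ given by $u(e)=\sigma(e)^{1/p}\rho(e)$ and $v(e)=\sigma(e)^{1/p}\rho^*(e)$. The purpose of this substitution is that the \emph{unweighted} standard $p$-norm of the rescaled densities reproduces the weighted energy: $\|u\|_p^p=\E_p(\rho)$, $\|v\|_p^p=\E_p(\rho^*)$, and, since the rescaling $e\mapsto\sigma(e)^{1/p}$ commutes with averaging, $\|\tfrac{u+v}{2}\|_p^p=\E_p(\tfrac{\rho+\rho^*}{2})$ and $\|\tfrac{u-v}{2}\|_p^p=\E_p(\tfrac{\rho-\rho^*}{2})$. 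Thus \thmref{clark}, applied to $u$ and $v$, furnishes an ``energy version'' of Clarkson's inequalities, after which every quantity in sight is an energy and the weights no longer appear until the very last step.

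With this translation in hand, two elementary ingredients finish the argument. First, since $A(\Gamma)$ is convex (\lemref{rho-star}) and both $\rho,\rho^*\in A(\Gamma)$, the midpoint $\tfrac{\rho+\rho^*}{2}$ is admissible, so $\E_p(\tfrac{\rho+\rho^*}{2})\ge\Mod_p(\Gamma)=\E_p(\rho^*)$. Second, factoring out the smallest weight gives the lower bound $\E_p(\tfrac{\rho-\rho^*}{2})=2^{-p}\sum_{e\in E}\sigma(e)|\rho(e)-\rho^*(e)|^p\ge 2^{-p}\sigma_{\min}\|\rho-\rho^*\|_p^p$, which is precisely how the standard $p$-norm of $\rho-\rho^*$ is produced on the left-hand side of both target inequalities.

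For $p\ge 2$ I would take the energy form of the first Clarkson inequality, replace the midpoint energy $\E_p(\tfrac{\rho+\rho^*}{2})$ by its lower bound $\Mod_p(\Gamma)=\E_p(\rho^*)$, and move it to the right, leaving $\E_p(\tfrac{\rho-\rho^*}{2})\le\tfrac12\big(\E_p(\rho)-\Mod_p(\Gamma)\big)$. Combining this with the $\sigma_{\min}$ lower bound and multiplying through by $2^p$ yields \eqref{eq:clark-gt2}. For $1<p\le 2$ the structure is identical but uses the second Clarkson inequality, in which the exponent $p'=p/(p-1)$ appears: since $t\mapsto t^{p'/p}$ is increasing (here $p'/p=1/(p-1)\ge 1$), admissibility gives $\E_p(\tfrac{\rho+\rho^*}{2})^{p'/p}\ge\Mod_p(\Gamma)^{p'/p}$; rewriting $\tfrac12\E_p(\rho)+\tfrac12\Mod_p(\Gamma)=\Mod_p(\Gamma)+\tfrac12\big(\E_p(\rho)-\Mod_p(\Gamma)\big)$, isolating $\E_p(\tfrac{\rho-\rho^*}{2})^{p'/p}$, applying the $\sigma_{\min}$ lower bound raised to the power $p'/p$, and dividing through produces \eqref{eq:clark-lt2}.

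The one genuinely nontrivial point, and the step I would flag as the main obstacle, is reconciling the two norms at play: Clarkson's inequalities are recorded in \thmref{clark} for the unweighted $p$-norm, whereas the modulus, and hence the admissibility bound on the midpoint, lives in the weighted energy $\E_p$. The rescaling $u=\sigma^{1/p}\rho$ resolves this cleanly, but one must be careful to carry out the entire argument in energy form: the Clarkson inequality (transported to the $u,v$ variables) and the convexity/admissibility bound must be combined as statements about $\E_p$, with the passage back to the standard norm $\|\rho-\rho^*\|_p$ postponed to the final $\sigma_{\min}$ estimate. Everything beyond this is routine bookkeeping with the conjugate exponents $p$ and $p'$.
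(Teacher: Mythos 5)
Your proof is correct and follows essentially the same route as the paper: the identical rescaling $\sigma^{1/p}\rho$ (the paper's $f$, $f^*$), the same midpoint-admissibility bound $\E_p\bigl(\tfrac{\rho+\rho^*}{2}\bigr)\ge\Mod_p(\Gamma)$ from convexity of $A(\Gamma)$, the same $\sigma_{\min}$ estimate on $\E_p\bigl(\tfrac{\rho-\rho^*}{2}\bigr)$, and the same application of the two Clarkson inequalities. No gaps; the bookkeeping with $p'$ in the $1<p\le 2$ case matches the paper's computation exactly.
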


\begin{proof}
  It is convenient to define two additional edge densities, $f$ and
  $f^*$, as
  \begin{equation*}
    f(e) := \sigma(e)^{\frac{1}{p}}\rho(e)\qquad\text{and}\qquad
    f^*(e) := \sigma(e)^{\frac{1}{p}}\rho^*(e).
  \end{equation*}
  Then
  \begin{equation}
    \eqlabel{norm-fs}
    \|f\|_p^p = \E_p(\rho)\qquad\text{and}\qquad
    \|f^*\|_p^p = \E_p(\rho^*)=\Mod_p(\Gamma),
  \end{equation}
  and
  \begin{equation}
    \eqlabel{diff-f-diff-rho}
    \left\|\frac{f-f^*}{2}\right\|_p^p = \frac{1}{2^p}\sum_{e\in E}\sigma(e)|\rho(e)-\rho^*(e)|^p
    \ge \frac{\sigma_{\min}}{2^p}\|\rho-\rho^*\|_p^p.
  \end{equation}
  Also,
  \begin{equation}
    \eqlabel{sum-f}
    \left\|\frac{f+f^*}{2}\right\|_p^p = \sum_{e\in E}\sigma(e)\left|
      \frac{\rho(e)+\rho^*(e)}{2}
    \right|^p = \E_p\left(\frac{\rho(e)+\rho^*(e)}{2}\right) \ge \Mod_p(\Gamma),
  \end{equation}
  where the final inequality follows from the fact that the
  admissible set $A(\Gamma)$ is convex, so that $(\rho+\rho^*)/2\in
  A(\Gamma)$.

  When $p\ge 2$, Equations~\eqref{eq:norm-fs} through~\eqref{eq:sum-f}
  combined with \thmref{clark} show that
  \begin{equation*}
    \frac{\sigma_{\min}}{2^p}\|\rho-\rho^*\|_p^p \le 
    \frac{1}{2}\E_p(\rho)+\frac{1}{2}\Mod_p(\Gamma)
    - \Mod_p(\Gamma),
  \end{equation*}
  which proves~\eqref{eq:clark-gt2}.  When $1<p\le 2$, a similar
  argument shows that
  \begin{equation*}
    \left(\frac{\sigma_{\min}}{2^p}\right)^{\frac{p'}{p}}\|\rho-\rho^*\|_p^{p'} \le
    \left(\frac{1}{2}\E_p(\rho)+\frac{1}{2}\Mod_p(\Gamma)\right)^{\frac{p'}{p}}
    - \Mod_p(\Gamma)^{\frac{p'}{p}},
  \end{equation*}
  implying~\eqref{eq:clark-lt2}.
\end{proof}

\begin{theorem}
  Let $\Gamma$ be a family of walks on a graph $G$ and let
  $1<p<\infty$.  Then
  \begin{equation*}
    \lim_{q\to p}\|\rho_p-\rho_q\|=0,
  \end{equation*}
  where $\rho_p$ and $\rho_q$ are the unique extremal densities for
  $p$ and $q>1$ respectively.
\end{theorem}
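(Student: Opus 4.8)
The plan is to combine the almost-minimizer stability estimate of \lemref{clark} with the continuity of the modulus from \thmref{mod-cont}. Fix $p$ with $1<p<\infty$ and let $\rho_p$ be its (unique) extremal density. For $q$ near $p$ let $\rho_q$ denote the unique extremal density for the $q$-modulus; by \lemref{rho-bounds} we have $0\le\rho_q\le 1$, i.e.\ $\rho_q\in A^*(\Gamma)$, and since admissibility does not depend on the energy parameter, $\rho_q\in A(\Gamma)$ as well. The key quantity to control is the $p$-energy gap $\E_p(\rho_q)-\Mod_p(\Gamma)$: once I show this tends to $0$ as $q\to p$, \lemref{clark} (applied with $\rho=\rho_q$ and $\rho^*=\rho_p$) will force $\|\rho_q-\rho_p\|_p\to 0$.

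To control the gap I argue two-sidedly. The lower bound is immediate: since $\rho_q\in A(\Gamma)$, we have $\E_p(\rho_q)\ge\Mod_p(\Gamma)$. For the upper bound I split into the cases $q<p$ and $q>p$. When $q<p$, the inequality $0\le\rho_q\le 1$ gives $\rho_q(e)^p\le\rho_q(e)^q$ pointwise (as in \eqref{eq:rho-q-lt-rho-p}), so $\E_p(\rho_q)\le\E_q(\rho_q)=\Mod_q(\Gamma)$. When $q>p$, I instead use the H\"older estimate \eqref{eq:holder-energy-est} applied to $\rho_q\in A^*(\Gamma)$, yielding $\E_p(\rho_q)\le\sigma(E)^{1-p/q}\E_q(\rho_q)^{p/q}=\sigma(E)^{1-p/q}\Mod_q(\Gamma)^{p/q}$. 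In either case, letting $q\to p$ and invoking continuity of $q\mapsto\Mod_q(\Gamma)$ from \thmref{mod-cont} (together with $\sigma(E)^{1-p/q}\to 1$ and continuity of the exponent maps), the right-hand side converges to $\Mod_p(\Gamma)$. Hence $\limsup_{q\to p}\E_p(\rho_q)\le\Mod_p(\Gamma)$, and with the lower bound this proves $\E_p(\rho_q)\to\Mod_p(\Gamma)$.

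With the energy gap shown to vanish, I finish by feeding it into \lemref{clark}. If $p\ge 2$, the bound \eqref{eq:clark-gt2} gives $\|\rho_q-\rho_p\|_p^p\le\frac{2^{p-1}}{\sigma_{\min}}\big(\E_p(\rho_q)-\Mod_p(\Gamma)\big)\to 0$. If $1<p\le 2$, the right-hand side of \eqref{eq:clark-lt2} is a fixed continuous function of the gap that vanishes when the gap does, so again $\|\rho_q-\rho_p\|_p\to 0$. Since the density space $\mathbb{R}^m$ is finite-dimensional, all norms are equivalent, so convergence in $\|\cdot\|_p$ yields convergence in the norm $\|\cdot\|$ of the statement. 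I expect the main obstacle to be the upper bound on $\E_p(\rho_q)$: the estimate is genuinely asymmetric in $q<p$ versus $q>p$, and one must use the right tool in each case (pointwise monotonicity of $t\mapsto t^s$ on $[0,1]$ in one, H\"older's inequality in the other) together with continuity of the modulus to make both one-sided limits collapse to $\Mod_p(\Gamma)$.
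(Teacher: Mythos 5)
Your proposal is correct and follows essentially the same route as the paper: reduce to showing $\E_p(\rho_q)\to\Mod_p(\Gamma)$ via \lemref{clark}, then establish the energy convergence by the pointwise bound \eqref{eq:rho-q-lt-rho-p} for $q<p$ and the H\"older estimate \eqref{eq:holder-energy-est} for $q>p$, concluding with the continuity of $q\mapsto\Mod_q(\Gamma)$ from \thmref{mod-cont}. The only difference is that you spell out the final application of Clarkson's bounds in the two cases $p\ge 2$ and $1<p\le 2$, which the paper leaves implicit.
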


Note that the norm in the theorem is irrelevant, since all norms on
$\mathbb{R}^m$ are equivalent.

\begin{proof}
  By \lemref{clark}, it is enough to show that $\E_p(\rho_q)\to\Mod_p(\Gamma)$ as $q\to p$.  For $q > p$, \Eqref{holder-energy-est} shows that
  \begin{equation*}
    \Mod_p(\Gamma) \le \E_p(\rho_q) \le \sigma(E)^{1-\frac{p}{q}}\E_q(\rho_q)^{\frac{p}{q}}
    = \sigma(E)^{1-\frac{p}{q}}\Mod_q(\Gamma)^{\frac{p}{q}}
  \end{equation*}
  and for $q<p$, \Eqref{rho-q-lt-rho-p} shows that
  \begin{equation*}
    \Mod_p(\Gamma) \le \E_p(\rho_q) \le \E_q(\rho_q) = \Mod_q(\Gamma).
  \end{equation*}
  Then, by continuity (\thmref{mod-cont}), letting $q$ tend to $p$ yields the result.
\end{proof}

\subsection{Extremal density as a gradient}\seclabel{gradient}

We now turn our attention to the behavior of the modulus as a function
of the weights $\sigma:E\to(0,\infty)$.  For this discussion, we fix a
graph $G=(V,E)$ and a non-empty family of walks $\Gamma$, but allow
the edge weights, $\sigma$, to vary.  In order to make explicit the
dependence on the weights $\sigma$, we use the notation
$\E_p(\rho;\sigma)$ and $\Mod_p(\Gamma;\sigma)$ to denote the
$p$-energy of $\rho$ and the $p$-modulus of $\Gamma$ respectively.  We
restrict attention to $p<\infty$, since $\E_\infty$ and $\Mod_\infty$
do not depend on $\sigma$. The next two lemmas show that modulus is
continuous and concave in $\sigma$.

\begin{lemma}\lemlabel{sigma-continuous}
  Let $1\le p<\infty$.  The function
  $\sigma\mapsto\Mod_p(\Gamma;\sigma)$ is Lipschitz continuous.
\end{lemma}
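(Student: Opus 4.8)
The plan is to exploit the fact that the admissible set $A(\Gamma)$ does not involve the weights at all, while the energy $\E_p(\rho;\sigma)=\sum_{e\in E}\sigma(e)|\rho(e)|^p$ is \emph{linear} in $\sigma$ for each fixed density $\rho$. Thus $\sigma\mapsto\Mod_p(\Gamma;\sigma)$ is an infimum of linear functions of $\sigma$, which immediately suggests concavity (the content of the lemma that follows) and, more importantly here, makes a direct Lipschitz estimate available via the usual ``envelope'' comparison between extremal densities.

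First I would fix two weight functions $\sigma_1,\sigma_2:E\to(0,\infty)$ and, using \lemref{rho-star} for existence together with the bounds in \lemref{rho-bounds}, choose an extremal density $\rho_2^*\in A(\Gamma)$ for $\Mod_p(\Gamma;\sigma_2)$ satisfying $0\le\rho_2^*\le 1$ (this is exactly where the hypothesis $p<\infty$ is used). Since $\rho_2^*$ is admissible but not necessarily extremal for $\sigma_1$, it furnishes the one-sided comparison
\[
  \Mod_p(\Gamma;\sigma_1)-\Mod_p(\Gamma;\sigma_2) \le \E_p(\rho_2^*;\sigma_1)-\E_p(\rho_2^*;\sigma_2) = \sum_{e\in E}\big(\sigma_1(e)-\sigma_2(e)\big)\rho_2^*(e)^p.
\]

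Next, because $0\le\rho_2^*(e)^p\le 1$ on every edge, the right-hand side is bounded by $\sum_{e\in E}|\sigma_1(e)-\sigma_2(e)|$. Swapping the roles of $\sigma_1$ and $\sigma_2$ (now invoking an extremal density for $\sigma_1$) yields the reverse inequality, and combining the two gives $|\Mod_p(\Gamma;\sigma_1)-\Mod_p(\Gamma;\sigma_2)|\le\sum_{e\in E}|\sigma_1(e)-\sigma_2(e)|$, i.e.\ Lipschitz continuity with constant $1$ in the $\ell^1$-norm on the space of weights. Since that space is the finite-dimensional $\mathbb{R}^m$, all norms are equivalent and the particular choice of norm is immaterial.

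I do not anticipate a serious obstacle; the estimate is elementary once the right density is chosen. The single point that genuinely requires care is the uniform bound $\rho^*\le 1$, which is what upgrades the estimate from mere continuity to Lipschitz continuity, and which fails for $p=\infty$—consistent with the fact that $\E_\infty$ and $\Mod_\infty$ do not depend on $\sigma$ at all, explaining the restriction to $1\le p<\infty$.
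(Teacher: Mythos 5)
Your proposal is correct and follows essentially the same argument as the paper: plug the extremal density for one weight (bounded by $0\le\rho^*\le 1$ via \lemref{rho-bounds}) into the energy for the other weight, and use linearity of $\E_p$ in $\sigma$. The only cosmetic differences are that the paper fixes an ordering of the two moduli without loss of generality where you swap roles symmetrically, and the paper states the Lipschitz bound as $m\|\sigma_2-\sigma_1\|_\infty$ where you use the equivalent $\ell^1$-norm constant $1$.
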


\begin{proof}
  Let $\sigma_1,\sigma_2:E\to(0,\infty)$ and let $\rho_1$ and
  $\rho_2$ be extremal densities for $\Mod_p(\Gamma;\sigma_1)$ and
  $\Mod_p(\Gamma;\sigma_2)$ respectively.  Without loss of generality,
  assume $\Mod_p(\Gamma;\sigma_1)\le \Mod_p(\Gamma;\sigma_2)$.  From
  \lemref{rho-bounds}, it follows that $|\rho_1|\le 1$, so
  \begin{equation*}
    \begin{split}
      \Mod_p(\Gamma;\sigma_2) - \Mod_p(\Gamma;\sigma_1) &\le
      \E_p(\rho_1;\sigma_2) - \E_p(\rho_1;\sigma_1) = \sum_{e\in
        E}\sigma_2(e)\rho_1(e)^p - \sum_{e\in E}\sigma_1(e)\rho_1(e)^p \\
      & = \sum_{e\in E}(\sigma_2(e)-\sigma_1(e))\rho_1(e)^p \le
      \sum_{e\in E}|\sigma_2(e)-\sigma_1(e)| \le
      m\|\sigma_2-\sigma_1\|_{\infty},
    \end{split}
  \end{equation*}
  where $\|\cdot\|_\infty$ is the standard max norm on $\mathbb{R}^m$.
\end{proof}

\begin{lemma}
  Let $1\le p<\infty$.  The function
  $\sigma\mapsto\Mod_p(\Gamma;\sigma)$ is concave.
\end{lemma}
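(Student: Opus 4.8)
The plan is to exploit the fact that the modulus is, for each fixed density, a \emph{linear} function of the weights, and that an infimum of linear functions is automatically concave. The crucial structural observation is that the admissible set $A(\Gamma)$ is defined purely through the constraints $\ell_\rho(\Gamma)\ge 1$ and $\rho\ge 0$, neither of which involves $\sigma$. Hence $A(\Gamma)$ is a fixed convex set, independent of the weights, and only the objective $\E_p(\rho;\sigma)=\sum_{e\in E}\sigma(e)|\rho(e)|^p$ carries the dependence on $\sigma$. For each fixed $\rho$, this objective is linear (indeed homogeneous) in $\sigma$.

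Concretely, I would fix two weight functions $\sigma_1,\sigma_2:E\to(0,\infty)$ and a parameter $\theta\in[0,1]$, set $\sigma:=\theta\sigma_1+(1-\theta)\sigma_2$, and let $\rho$ be an extremal density for $\Mod_p(\Gamma;\sigma)$ (which exists by \lemref{rho-star}). The computation to carry out is then a one-line linear splitting:
\begin{equation*}
  \Mod_p(\Gamma;\sigma) = \E_p(\rho;\sigma)
  = \theta\,\E_p(\rho;\sigma_1) + (1-\theta)\,\E_p(\rho;\sigma_2).
\end{equation*}
Because $A(\Gamma)$ does not depend on the weights, the very same $\rho$ is admissible for both the $\sigma_1$ and the $\sigma_2$ modulus problems, so $\E_p(\rho;\sigma_i)\ge\Mod_p(\Gamma;\sigma_i)$ for $i=1,2$. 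Substituting these two lower bounds yields
\begin{equation*}
  \Mod_p(\Gamma;\sigma)
  \ge \theta\,\Mod_p(\Gamma;\sigma_1) + (1-\theta)\,\Mod_p(\Gamma;\sigma_2),
\end{equation*}
which is exactly the concavity inequality.

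There is no genuine obstacle here: the entire argument hinges on the single fact that admissibility is decoupled from the weights, so the modulus is a pointwise infimum of the concave (indeed linear) maps $\sigma\mapsto\E_p(\rho;\sigma)$ over $\rho\in A(\Gamma)$. The only point worth stating explicitly is the existence of an extremal $\rho$ attaining $\Mod_p(\Gamma;\sigma)$, which lets me write an equality rather than merely an infimum at the midpoint weight; this is supplied by \lemref{rho-star}. Everything else is routine, and the same reasoning applies verbatim across the whole range $1\le p<\infty$.
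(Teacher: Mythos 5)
Your proof is correct and is essentially identical to the paper's own argument: both take the extremal density for the combined weight $\theta\sigma_1+(1-\theta)\sigma_2$, split the energy linearly in $\sigma$, and use the fact that this density remains admissible for the $\sigma_1$- and $\sigma_2$-problems since $A(\Gamma)$ does not depend on the weights. The only cosmetic difference is that the paper writes the chain of inequalities starting from $\theta\Mod_p(\Gamma;\sigma_1)+(1-\theta)\Mod_p(\Gamma;\sigma_2)$ (and introduces extremal densities $\rho_1,\rho_2$ for $\sigma_1,\sigma_2$ that it never actually uses), whereas you start from the midpoint modulus; the substance is the same.
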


\begin{proof}
  Let $\sigma_1,\sigma_2:E\to(0,\infty)$ and let $\theta\in[0,1]$.
  Let $\rho_1$, $\rho_2$ and $\rho^*$ be extremal densities for
  $\sigma_1$, $\sigma_2$ and $\sigma^*:=\theta\sigma_1 +
  (1-\theta)\sigma_2$, respectively.  Then
  \begin{equation*}
    \begin{split}
      \theta\Mod_p(\Gamma;\sigma_1) +
      (1-\theta)\Mod_p(\Gamma;\sigma_2) &\le \theta\sum_{e\in
        E}\sigma_1(e)\rho^*(e)^p + (1-\theta)\sum_{e\in
        E}\sigma_2(e)\rho^*(e)^p \\
      &= \sum_{e\in E}\sigma^*(e)\rho^*(e)^p = \Mod_p(\Gamma;\theta\sigma_1+(1-\theta)\sigma_2).
    \end{split}
  \end{equation*}
\end{proof}

For $p>1$, the following theorem provides an interpretation of the
unique extremal density in terms of the gradient of
$\Mod_p(\Gamma;\sigma)$ with respect to $\sigma$.

\begin{theorem}
  Let $1<p<\infty$ and define the function
  $F(\sigma):=\Mod_p(\Gamma;\sigma)$ on weights
  $\sigma:E\to(0,\infty)$.  Given a weight $\sigma$, let
  $\rho_\sigma^*$ denote the unique extremal density for
  $\Mod_p(\Gamma;\sigma)$.  Then 
  \begin{equation}\eqlabel{derivative}
    \frac{\partial F(\sigma)}{\partial\sigma(e)} = \rho_\sigma^*(e)^p.
  \end{equation}
\end{theorem}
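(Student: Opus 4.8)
The plan is to exploit the fact that the admissible set $A(\Gamma)$ does not depend on the weights, so that
$F(\sigma)=\Mod_p(\Gamma;\sigma)=\inf_{\rho\in A(\Gamma)}\E_p(\rho;\sigma)$
is an infimum of functions each of which is \emph{linear} in $\sigma$: for fixed $\rho$ we have $\E_p(\rho;\sigma)=\sum_{e}\sigma(e)\rho(e)^p$. This is precisely the setting of the envelope theorem, and since the minimizer $\rho_\sigma^*$ is unique for $1<p<\infty$ (\lemref{rho-star}), I expect the gradient of $F$ to coincide with the gradient of the selected linear piece, namely $\partial_{\sigma(e)}\E_p(\rho_\sigma^*;\sigma)=\rho_\sigma^*(e)^p$. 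To make this rigorous I will produce a two-sided squeeze on the difference quotient and then pass to the limit using continuity of $\rho_\sigma^*$ in $\sigma$.

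First I would fix an edge $e$ and perturb only its weight, writing $\sigma_h$ for the weight that agrees with $\sigma$ everywhere except that $\sigma_h(e)=\sigma(e)+h$, with $|h|$ small enough to keep all weights positive. Using $\rho_\sigma^*$ as a (generally suboptimal) competitor for $\sigma_h$, and $\rho_{\sigma_h}^*$ as a competitor for $\sigma$, and noting that changing only the weight of $e$ alters the energy by exactly $h$ times the $p$th power of the density on $e$, I obtain $F(\sigma_h)\le\E_p(\rho_\sigma^*;\sigma_h)=F(\sigma)+h\,\rho_\sigma^*(e)^p$ and $F(\sigma)\le\E_p(\rho_{\sigma_h}^*;\sigma)=F(\sigma_h)-h\,\rho_{\sigma_h}^*(e)^p$. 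Rearranging, for $h>0$ this gives $\rho_{\sigma_h}^*(e)^p\le (F(\sigma_h)-F(\sigma))/h\le\rho_\sigma^*(e)^p$, while for $h<0$ division by the negative quantity $h$ reverses the inequalities to produce $\rho_\sigma^*(e)^p\le (F(\sigma_h)-F(\sigma))/h\le\rho_{\sigma_h}^*(e)^p$. In either case the difference quotient is trapped between $\rho_\sigma^*(e)^p$ and $\rho_{\sigma_h}^*(e)^p$.

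It then remains to show $\rho_{\sigma_h}^*(e)\to\rho_\sigma^*(e)$ as $h\to 0$, and this continuity of the extremal density as a function of $\sigma$ is the main obstacle, since the earlier continuity result (\thmref{mod-cont}) only addresses dependence on $p$. I would derive it from \lemref{clark} applied on the \emph{fixed} graph $(V,E,\sigma)$ with the admissible choice $\rho=\rho_{\sigma_h}^*$: the right-hand sides of the Clarkson estimates vanish as $\E_p(\rho_{\sigma_h}^*;\sigma)\to\Mod_p(\Gamma;\sigma)$, so it suffices to establish this convergence. But $\E_p(\rho_{\sigma_h}^*;\sigma)=\E_p(\rho_{\sigma_h}^*;\sigma_h)-h\,\rho_{\sigma_h}^*(e)^p=F(\sigma_h)-h\,\rho_{\sigma_h}^*(e)^p$, and since $0\le\rho_{\sigma_h}^*\le 1$ by \lemref{rho-bounds} the correction term is bounded by $|h|$, while $F(\sigma_h)\to F(\sigma)$ by Lipschitz continuity (\lemref{sigma-continuous}). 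Hence $\E_p(\rho_{\sigma_h}^*;\sigma)\to F(\sigma)$, \lemref{clark} forces $\|\rho_{\sigma_h}^*-\rho_\sigma^*\|_p\to 0$, and in particular $\rho_{\sigma_h}^*(e)^p\to\rho_\sigma^*(e)^p$. Passing to the limit $h\to 0$ in the squeeze of the previous paragraph then shows that both one-sided derivatives of $F$ in the direction of edge $e$ equal $\rho_\sigma^*(e)^p$, establishing \eqref{eq:derivative}.
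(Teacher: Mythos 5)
Your proposal is correct and follows essentially the same route as the paper's proof: the same competitor-based squeeze on the difference quotient, the same use of \lemref{sigma-continuous}, the bounds $0\le\rho^*\le 1$ from \lemref{rho-bounds}, and \lemref{clark} to conclude $\rho_{\sigma_h}^*\to\rho_\sigma^*$, followed by passing to the limit in the squeeze. The only cosmetic difference is that you perturb a single coordinate $\sigma(e)$ with $h$ of either sign, whereas the paper perturbs along an arbitrary direction $\eta:E\to\mathbb{R}$ and lets $h\to 0^+$; both yield the stated partial derivative.
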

\begin{proof}
  Let $\sigma:E\to(0,\infty)$ and $\eta:E\to\mathbb{R}$.  For $h>0$
  sufficiently small, $\sigma_h := \sigma+h\eta$ is positive.  Let
  $\rho_h^*$ be the extremal density for $\Mod_p(\Gamma;\sigma_h)$,
  and let  $\rho^*=\rho_0^*$, the extremal density for
  $\Mod_p(\Gamma;\sigma)$.  Note that, for any $\rho:E\to\mathbb{R}$,
  \begin{equation*}
    \E_p(\rho;\sigma_h) = \sum_{e\in E}(\sigma(e)+h\eta(e))|\rho(e)|^p
    = \E_p(\rho;\sigma) + h\sum_{e\in E}\eta(e)|\rho(e)|^p.
  \end{equation*}
  In particular
  \begin{equation*}
    \begin{split}
      \Mod_p(\Gamma;\sigma_h) \le \E_p(\rho^*;\sigma_h) =
      \E_p(\rho^*;\sigma) + h\sum_{e\in E}\eta(e)\rho^*(e)^p =
      \Mod_p(\Gamma;\sigma) + h\sum_{e\in E}\eta(e)\rho^*(e)^p
    \end{split}
  \end{equation*}
  and
  \begin{equation*}
    \Mod_p(\Gamma;\sigma_h) = \E_p(\rho_h^*;\sigma_h)
    = \E_p(\rho_h^*;\sigma) + h\sum_{e\in E}\eta(e)\rho_h^*(e)^p
    \ge \Mod_p(\Gamma;\sigma) + h\sum_{e\in E}\eta(e)\rho_h^*(e)^p.
  \end{equation*}
  Combined, these two inequalities imply that for $h>0$ sufficiently
  small
  \begin{equation}\eqlabel{diff-quotient}
    \sum_{e\in E}\eta(e)\rho_h^*(e)^p \le \frac{F(\sigma+h\eta) - F(\sigma)}{h}
    \le \sum_{e\in E}\eta(e)\rho^*(e)^p.
  \end{equation}
  Now, since each $\rho_h^*\in A^*(\Gamma)$ by \lemref{rho-bounds},
  \begin{equation*}
    \begin{split}
      \E_p(\rho_h^*;\sigma) - \Mod_p(\Gamma;\sigma) &=
      \E_p(\rho_h^*;\sigma_h) - h\sum_{e\in E}\eta(e)\rho_h^*(e)^p -
      \Mod_p(\Gamma;\sigma) \\ &\le
      |\Mod_p(\Gamma;\sigma_h)-\Mod_p(\Gamma;\sigma)| + h\sum_{e\in
        E}|\eta(e)|.
    \end{split}
  \end{equation*}
  \lemref{sigma-continuous} implies that the right-hand side vanishes
  in the limit as $h\to 0$, so
  $\E_p(\rho_h^*;\sigma)\to\Mod_p(\Gamma;\sigma)$.  And \lemref{clark}
  then implies that $\rho_h^*\to\rho^*$.  Taking the limit
  in~\eqref{eq:diff-quotient} yields
  \begin{equation*}
    \lim_{h\to 0^+} \frac{F(\sigma+h\eta) - F(\sigma)}{h} = \sum_{e\in E}\eta(e)\rho^*(e)^p,
  \end{equation*}
  which, since $\eta$ was arbitrary, implies~\eqref{eq:derivative}.
\end{proof}

\section{Related work}\seclabel{related}

The concept of the modulus of a family of walks on a graph is a
natural analog of the continuum definition of modulus of a family of
curves in the plane, introduced by Beurling \cite{beurling1989} and Ahlfors \cite{ahlfors1982}.  The task of
detailing its historical development is complicated by the facts that
modulus seems almost infinitely generalizable and that similar ideas
exist in different fields, with different notation and terminology.
Certainly, the work on maximal flow by Ford and
Fulkerson~\cite{Ford1956} is closely related to modulus (see
\thmref{1-mod}), as is the work on extremal length in networks by
Duffin~\cite{Duffin1962}.  Indeed, Duffin's paper cites Beurling and
Ahlfors as well as Ford and Fulkerson in the introduction, and
Duffin's extremal width is equivalent to a particular modulus problem.
(Compare Theorem~2 of~\cite{Duffin1962} to \thmref{density-potential}
of the present work.)  More recently, one finds the work of
Schramm~\cite{Schramm1993}, which uses a form of modulus to study
square tilings, and the work of Ha{\"i}ssinsky~\cite{Haissinsky2009},
which develops a discrete version of the modulus quite similar to the
definition in the present work, but with a different goal in mind.

\section{Discussion}

This paper presents an exploration, in a fairly general setting, of
the ways in which modulus generalizes and expands upon important graph
theoretical concepts.  Modulus is shown to generalize three
fundamental concepts from graph theory---shortest path, effective
conductance, and min-cut---and to interpolate, in a sense, among these
three concepts.  These results depend upon lower and upper bounds on
the modulus, which are established using tools from convex
optimization.

With these connections firmly in place, a number of questions
naturally arise.  For example, resistance distance has proven quite
valuable in a wide variety of applications.  As we have shown, on a
weighted, undirected graph, this distance is equivalent to the
$2$-modulus, in the sense that the resistance distance between $s$ and
$t$ in $V$ is $\Mod_2(\Gamma(x,y))^{-1}$.  Is $p=2$ the only useful
case?  Or are there applications for which some other value of $p$ is
useful?  Moreover, there is no obvious sense of effective resistance
on directed graphs, but such a concept may well be useful (see,
e.g.,~\cite{Heman}.)  We are aware, for example, of a recent arXiv
paper~\cite{Young} which constructs a notion of resistance distance on
directed graphs that preserves certain important control theoretic
properties.  The average of $\Mod_p(\Gamma(x,y))$ and
$\Mod_p(\Gamma(y,x))$ might also be an interesting quantity to look at
in applications.

There are also several interesting open questions regarding the
dependence of the modulus and extremal density on $p$ beyond the
continuity and monotonicity proved in \secref{p-dependence}.  For
example, it is not yet clear how smooth the function
$p\mapsto\Mod_p(\Gamma)$ is in general.  Moreover, it would be
interesting to better understand the behavior of the unique extremal
density $\rho_p$ either as $p\to 1$ or as $p\to\infty$.  In both
cases, one might wonder if there is a limiting density and, if so, if
the limiting density is extremal for the $1$- or $\infty$-modulus.

Another potentially interesting direction is graph optimization,
similar in spirit to the results of \rref{boyd2008}.  As shown in
\secref{gradient}, the function $\sigma\mapsto\Mod_p(\Gamma;\sigma)$
is concave, so the problem of maximizing the modulus subject to convex
constraints on $\sigma$ (e.g., that the sum of $\sigma$ over all edges
is bounded) is a convex optimization problem whose solution could give
some insight into the structure of a graph relative to a given family
of walks.

As mentioned in the previous section, generalizations of the modulus
presented here seem almost endless.  Some obvious directions of
research include the replacement of edge densities with vertex
densities, or using a mixture of both.  Moreover, the objects in
$\Gamma$ don't actually need to be walks.  One might consider a ``cut
modulus'', for example, where $\Gamma$ is the family of all edge cuts
separating two given sets of vertices.

Another interesting line of inquiry lies in understanding what the
modulus can say about the structure of a graph.  As mentioned in the
introduction, for example, it can be interesting to consider moduli of
families of walks from $s$ to $t$ passing through $u$.  One might also
consider other families, such as the family of simple paths with at
least $L$ hops, or the family of simple cycles with diameter at least
$D$.  What might the moduli of these families say about the structure
of the graph?  What is the expected modulus of these families on a
given random class of graph?

Perhaps even more intriguing is the interpretation offered by the dual
formulation of \secref{lagrange}.  In the dual interpretation, the edge
density $\rho$ is replaced by non-negative function $\lambda$ on
$\Gamma$.  Through duality, $\lambda$ is related to the sensitivity of
the modulus to the admissibility constraints, giving a natural ranking
of the importance of the walks in $\Gamma$.  Those walks with larger
$\lambda$ are more important in the modulus problem than those with
smaller $\lambda$.  This perspective of \emph{most important walks} is
rich for exploration and has been developed in \cite{apc}. For instance, by replacing $\Gamma$ by some properly chosen
subset $\Gamma'$ with large $\lambda$ values, one can
obtain a subfamily with nearly the same modulus as $\Gamma$.  This
viewpoint also lends itself to a concept of sparsification subordinate
to a modulus problem: by choosing a subfamily $\Gamma'$ of most
important walks, and then removing from $E$ all edges that do not
intersect $\Gamma'$, it is possible to produce a subgraph that
approximates the behavior of the original graph from the perspective
of the given modulus problem.

\section*{Acknowledgments}
This material is based upon work supported by the National Science
Foundation under Grant No.~126287 (Albin, Brunner, Perez, Wiens),
through Kansas State University's 2014 Summer Undergraduate
Mathematics Research program, and under Grants No.~1201427
(Poggi-Corradini) and~1515810 (Albin).  The authors are grateful to
Professors Marianne Korten and David Yetter for organizing an
exceptional summer research program.  We also thank the anonymous
referee for a very careful reading and helpful suggestions that
improved the paper.

\section*{References}

\bibliographystyle{acm}
\biboptions{numbers,sort&compress}
\bibliography{2015_ABPPW}

\end{document}